\documentclass[11pt]{amsart}
\usepackage[utf8]{inputenc}
\usepackage{amsthm}
\usepackage{amsmath}
\usepackage{amssymb}
\usepackage{mathrsfs}
\usepackage{hyperref}
\usepackage{amsrefs}
\usepackage{xcolor}
\newtheorem{theorem}{Theorem}[section]

\newtheorem{lemma}[theorem]{Lemma}

\newtheorem{definition}[theorem]{Definition}

\newtheorem{remark}[theorem]{Remark}

\newtheorem{corollary}[theorem]{Corollary}

\numberwithin{equation}{section}

\newcommand{\bb}[1]{\mathbb{#1}}
\newcommand{\cl}[1]{\mathcal{#1}}
\begin{document}
\title[Perturbations of Truncated Toeplitz operators]{Perturbations of Truncated Toeplitz operators}

\author{Arup Chattopadhyay*}
\address{*Department Of Mathematics\\
         Indian Institute of Technology Guwahati\\
         Guwahati-781039, India.
    }
\email{arupchatt@iitg.ac.in}
\author{Arshad Khan**}
\address{**Department Of Mathematics\\
         Indian Institute of Technology Guwahati\\
         Guwahati-781039, India.
    }
\email{arshad.khan@rnd.iitg.ac.in}

\subjclass{ Primary 47B35; Secondary 47B02, 47B38, 30H10.}

\keywords{Truncated Toeplitz operators; finite-rank perturbations;
nearly invariant subspaces; matrix-valued Toeplitz operators;
Toeplitz kernels; model spaces; multiband spaces.}

\begin{abstract}
We investigate the structure of kernels of finite-rank perturbations of truncated Toeplitz operators. By realizing a perturbed truncated Toeplitz operator on a model space as a suitable matrix-valued truncated Toeplitz operator on an enlarged model space, we identify its kernel with a coordinate projection of the kernel of an associated block Toeplitz operator. This realization enables us to transfer the nearly $S^*$-invariant structure of Toeplitz kernels to the perturbed truncated setting. More precisely, for a perturbation of rank at most $n$, we show that the kernel of the perturbed truncated Toeplitz operator is nearly $S^*$-invariant with defect at most $2n+1$. We then extend the method to truncated Toeplitz operators on multiband spaces. In this setting, a perturbation of rank at most $n$ again preserves the finite-defect nearly $S^*$-invariant structure of the kernel, with the explicit upper bound $2n+2$ for the defect.

\end{abstract}

\maketitle

\section{Introduction} \label{intro}

Truncated Toeplitz operators (TTOs) form an important class of operators associated with model spaces and provide a fruitful setting for interactions between operator theory and function theory. Let $\theta$ be an inner function on the open unit disc centred at the origin $\mathbb D$ and let $\cl K_\theta=H^2(\bb D)\ominus \theta H^2(\bb D)$ be the corresponding model space. For a symbol $g\in L^\infty(\mathbb T)$, the TTO $A^{\theta}_g$ on $\cl K_\theta$ is defined by
$$
A_g^\theta f=P_\theta(gf), \qquad f\in \cl K_\theta,
$$
where $P_\theta$ denotes the orthogonal projection from $L^2(\mathbb T)$ onto $\cl K_\theta$.

The systematic study of TTOs was initiated by Sarason \cite{Sar02} in 2007. Sarason investigated compressions of Toeplitz operators to model spaces and established fundamental structural and algebraic properties of TTOs. The subsequent development of the subject was surveyed by Garcia and Ross \cite{GraRos} in 2013. Since then, various aspects of TTOs have been investigated, including their symbols, algebraic properties, finite-rank structure, kernels, and vector-valued extensions.

For the present work, the kernels of perturbed TTOs are of particular interest. A fundamental connection in this direction is provided by nearly invariant subspaces. Let $S^*$ denote the backward shift operator on the Hardy space $H^2(\bb D)$. A subspace $\cl M$ of $H^2(\bb D)$ is called nearly $S^*$-invariant if $$f\in \cl M \text{ with }f(0)=0
\quad\Longrightarrow\quad
S^*f\in \cl M.$$ More generally, one considers nearly $S^*$-invariant subspaces with finite defect, in which the failure of exact near invariance is confined to a finite-dimensional defect subspace. The theory of nearly invariant subspaces originates in the work of Hitt and Sarason. In particular, Hitt \cite{Hit} and Sarason \cite{Sar01} established fundamental structural results closely connected with the study of kernels of Toeplitz operators. The notion of nearly invariant subspaces has also been extended beyond classical Hardy spaces to operators acting on general Hilbert spaces, where it has been further investigated in various operator-theoretic settings. For further developments in this direction, one can refer to
\cite{SupAru2, KLD, Era, LP}.

The vector-valued theory was developed by Chalendar, Chevrot and Partington \cite{CCP}, who characterized nearly invariant subspaces of vector-valued Hardy spaces and thereby extended the scalar theory. This perspective is particularly useful in the present setting because matrix-valued TTOs naturally act on vector-valued model spaces. There is also a direct connection between nearly invariant subspaces and TTOs. Hartmann and Ross \cite{HarRos} studied TTOs in the setting of nearly invariant subspaces and investigated the associated boundary behavior. More recently, O'Loughlin \cite{ROL1} developed a theory of vector-valued nearly invariant subspaces with finite defect. This theory provides a unified framework for the kernels of Toeplitz operators, TTOs, TTOs on multiband spaces, and dual TTOs. Thus, nearly $S^*$-invariant subspaces with finite defect provide a natural framework for studying kernels of a broad class of Toeplitz-type operators.

A second motivation for the present work comes from finite-rank perturbations. In the classical Toeplitz setting, Liang and Partington \cite{LP3} established that kernels of finite-rank perturbations of Toeplitz operators are nearly $S^*$-invariant with finite defect. More precisely, they showed that a finite-rank perturbation does not destroy the underlying nearly invariant structure of the kernel, but instead introduces a finite-dimensional defect. Further, this investigation was extended to vector-valued Hardy spaces by Chattopadhyay, Das and Pradhan \cite{CDP2}. Further developments in this direction include the work of  Chattopadhyay and Jana \cite{SupAru1}, who studied kernels of finite-rank perturbations of Hankel operators. One can also refer to the work of  Gu \cite{Gu}. This naturally raises the question of whether an analogous phenomenon persists for TTOs.

The question is nontrivial because a TTO acts on a model space rather than on the full Hardy space. Consequently, the standard arguments for kernels of Toeplitz operators do not transfer directly to the truncated setting. A natural way to overcome this difficulty is to exploit the relationship between matrix-valued TTOs and Toeplitz operators with matrix-valued symbols. This viewpoint was developed systematically by O'Loughlin \cite{ROL2}. In particular, O'Loughlin showed, under suitable hypotheses, how the kernel of a matrix-valued TTO can be represented through an $S^*$-invariant subspace and constructed a Toeplitz operator that is equivalent, after extension, to the matrix-valued TTO. This provides a useful mechanism for transferring kernel questions to the better-understood theory of Toeplitz kernels.

The finite-rank aspect of the present problem is also related to the study of finite-rank TTOs themselves. In \cite{Bes}, Bessonov established a characterization of finite-rank TTOs. Subsequently, Ma and Zheng \cite{MaZhe} provided an alternative proof of Bessonov's theorem. Our problem is different: rather than characterizing TTOs that themselves have finite rank, we study how the kernel structure changes when a TTO is subjected to an arbitrary finite-rank perturbation. This distinction clarifies the problem addressed in this paper.

Motivated by these developments, we investigate kernels of finite-rank perturbations of TTOs. More precisely, we consider operators of the form
$$
\widetilde{A_n}:=A_g^\theta-\sum_{j=1}^{n}p_j\otimes q_j,
$$
where $g\in L^\infty(\mathbb T)$ and $p_j,q_j\in \cl K_\theta$. Here $\displaystyle{\sum_{j=1}^{n}p_j\otimes q_j}$ is an operator of rank at most $n$. Our first objective is to determine whether $\ker \widetilde{A_n}$ retains a nearly $S^*$-invariant structure and, in particular, whether its defect can be controlled in terms of the rank of the perturbation.

Our main approach is to encode the finite-rank perturbation by adjoining finitely many finite-dimensional coordinates. We consider the matrix-valued inner function $\Theta_n
=
\operatorname{diag}
\big(
\theta,
\underbrace{z,\ldots,z}_{n\text{ copies}}
\big)$. The perturbed scalar TTO can consequently be realized through a suitable matrix-valued TTO acting on the enlarged model space $\cl K_{\Theta_n}$. This realization connects the kernel of the perturbed TTO with the kernel of an associated block Toeplitz operator. Since Toeplitz kernels have a well-established nearly $S^*$-invariant structure, this representation provides the mechanism for transferring that structure to the original finite-rank perturbed TTO.

We next extend the construction to the multiband setting. Let $\cl M=\phi \cl K_\theta\oplus\psi \cl K_\theta$, where $\phi$ and $\psi$ are unimodular functions in $L^\infty(\mathbb T)$ such that the two summands are orthogonal. Consider the unitary map
\begin{align*}
U:\cl M\longrightarrow \cl K_\theta(\mathbb D,\mathbb C^2),\qquad
U(\phi f_1+\psi f_2)=(f_1,f_2).
\end{align*}
Under this identification, TTOs on the multiband space are represented by $2\times2$ matrix-valued TTOs on the vector-valued model space. This fits naturally into the matrix-valued framework developed by O'Loughlin \cite{ROL2}.

For a perturbation of rank at most $n$,
$$
\widetilde A^{\cl M}_n
:=
A_g^\cl M-\sum_{j=1}^{n}u_j\otimes v_j,
$$
we transfer the problem through $U$ and obtain a finite-rank perturbation of a $(n+2)\times (n+2)$ matrix-valued TTO. The appropriate enlarged inner matrix function is $\Phi_n
=
\operatorname{diag}
\big(
\theta,\theta,
\underbrace{z,\ldots,z}_{n\text{ copies}}
\big)$.
The resulting matrix-valued TTO can again be represented through a block Toeplitz operator. The associated block symbol has the form
$$
\mathfrak G_n
=
\begin{pmatrix}
\Phi^*_n&0\\
\mathcal A_n&\Phi_n
\end{pmatrix},
$$
where $\mathcal A_n$ is the $(n+2)\times(n+2)$ matrix-valued symbol encoding both the original matrix-valued TTO and the finite-rank perturbation. Consequently, the kernel of $\widetilde A^{\cl M}_n$ is obtained from the kernel of the corresponding block Toeplitz operator by a coordinate projection. In particular, we show that $\ker \widetilde A^{\cl M}_n$ is nearly $S^*$-invariant with finite defect.

The paper is organized as follows. Section \ref{Pre} recalls the basic notation, terminology, and definitions, including Hardy spaces, model spaces, TTOs, and nearly $S^*$-invariant subspaces, that will be used throughout the paper. In Section \ref{Mat}, we establish the matrix-valued realization of TTO kernels in terms of kernels of block Toeplitz operators and derive the corresponding nearly invariant subspace result. Section \ref{Tru} applies this construction to finite-rank perturbations of scalar TTOs. Section \ref{Mul} develops the multiband setting and proves the main result concerning finite-rank perturbations of TTOs on multiband spaces.

\section{Novelty and Contributions }

The novelty of this work lies in developing a systematic framework for studying the kernels of finite-rank perturbations of TTOs. Although finite-rank perturbations of Toeplitz operators have been investigated extensively, the corresponding problem for perturbed TTOs has not been addressed in this framework. The main difficulty is that a finite-rank perturbation generally destroys the truncated Toeplitz structure, while the operator continues to act on the model space. To overcome this difficulty, we introduce an enlarged matrix-valued model that incorporates the finite-rank perturbation into a matrix-valued TTO. Its kernel is then realized as a coordinate projection of the kernel of an associated block Toeplitz operator. This connection allows us to transfer the nearly invariant structure of Toeplitz kernels to the setting of finite-rank perturbations of TTOs.

For scalar TTOs, we establish an exact realization of the kernel of a finite rank perturbation through an enlarged matrix-valued TTO (Theorem \ref{R41}). Combining this realization with a general result on the nearly \(S^*\)-invariant structure of matrix-valued TTO kernels and a coordinate-projection estimate, we prove that the kernel of finite rank perturbed TTO with rank at most $n$ is nearly \(S^*\)-invariant with defect at most \(2n+1\) (Theorem \ref{R43}). Furthermore, the structural results developed in this setting provide a decomposition of these kernels in terms of nearly invariant subspaces with finite defect (Theorem \ref{R44}).

Further, we extend the realization framework to TTOs on multiband spaces. By identifying the multiband space with a vector-valued model space, we obtain an analogous exact kernel representation for finite-rank perturbed TTO (Theorem \ref{R51}). As a consequence, we establish that the kernel of finite-rank perturbed TTO with rank at most $n$ is nearly backward shift invariant with defect at most \(2n+2\) (Theorem \ref{R52}). These results distinguish the present work from earlier studies of finite-rank perturbations of Toeplitz operators, which act on the  Hardy space. Our contribution concerns arbitrary finite-rank perturbations of TTOs and provides both an exact kernel realization and explicit quantitative bounds for their nearly invariant structure.

\section{Preliminaries}\label{Pre}

Throughout the paper, let $\mathbb D$ and $\mathbb T$ denote the open unit disc and the unit circle centred at the origin in the complex plane $\mathbb C$, respectively. For \(m\in\mathbb N\), the $\mathbb C^m$-valued Lebesgue space, denoted by
\(L^2(\mathbb T,\mathbb C^m)\), is defined as
\[
L^2(\mathbb T,\mathbb C^m)
=
\left\{
f=(f_1,\ldots,f_m):
f_j\in L^2(\mathbb T),\quad 1\leq j\leq m
\right\}.
\]
It is a Hilbert space with inner product
\[
\langle f,g\rangle_{L^2(\mathbb T,\mathbb C^m)}
=
\sum_{j=1}^m
\langle f_j,g_j\rangle_{L^2(\mathbb T)},
\]
where $f=(f_1, \dots , f_m)$ and $g=(g_1, \dots , g_m)$ are in $L^2(\bb T, \bb C^m)$.
Every $f$ in \(L^2(\mathbb T,\mathbb C^m)\) can be represented by its Fourier series
\[
f(z)=\sum_{k\in\mathbb Z}A_kz^k,
\]
where $A_k\in\mathbb C^m$ and
\[
\sum_{k\in\mathbb Z}\|A_k\|_{\mathbb C^m}^2<\infty.
\]

The vector-valued Hardy space is a closed subspace of \(L^2(\mathbb T,\mathbb C^m)\) consisting of functions whose negative Fourier coefficients vanish. It will serve as the ambient Hilbert space for the operator-theoretic considerations in this paper.

\begin{definition}
The $\mathbb{C}^m$-valued Hardy space, denoted by
$H^2(\mathbb{D},\mathbb C^m)$, is the Hilbert space of $\mathbb C^m$-valued analytic functions defined on $\mathbb D$ with square summable Fourier coefficients in their power series representations. Thus,
\begin{eqnarray*}
 H^2(\mathbb{D},\mathbb C^m):=
 \Bigg\{F=
 \sum_{i=0}^{\infty}A_iz^i :
 A_i \in \mathbb C^m,\ 
 \sum_{i=0}^{\infty}\|A_i\|_{\mathbb C^m}^2 < \infty
 \Bigg\}.
\end{eqnarray*}
For any
$F(z)=\sum_{i=0}^{\infty}A_i z^i$ and
$H(z)=\sum_{i=0}^{\infty}B_i z^i$ in
$H^2(\mathbb{D},\mathbb C^m)$, the inner product is given by
\begin{eqnarray*}
\langle F,H\rangle
=
\sum_{i=0}^{\infty}
\langle A_i,B_i\rangle_{\mathbb C^m}.
\end{eqnarray*}
\end{definition}
We identify \(H^2(\mathbb D,\mathbb C^m)\) with its boundary functions in
\(L^2(\mathbb T,\mathbb C^m)\).
 With this identification, let
\[
P_+:L^2(\mathbb T,\mathbb C^m)
\longrightarrow H^2(\mathbb D,\mathbb C^m)
\]
denote the orthogonal projection onto
\(H^2(\mathbb D,\mathbb C^m)\). Thus, for
\[
f(z)=\sum_{k\in\mathbb Z}A_kz^k,
\]
we have
\[
P_+f(z)=\sum_{k=0}^{\infty}A_kz^k.
\]

We next introduce matrix-valued functions, which will be used in the definition of Toeplitz and TTOs. The space of \(n\times n\) matrices with each entry in \(L^\infty(\mathbb T)\) is denoted by
\(L^\infty(\mathbb T)^{n\times n}\).

\begin{definition}
Let $G\in L^\infty(\mathbb T)^{m\times m}$. The Toeplitz operator with symbol $G$ on
$H^2(\mathbb D,\mathbb C^m)$ is the bounded operator defined by
\[
T_GF=P_+(GF),\qquad F\in H^2(\mathbb D,\mathbb C^m),
\]
where $P_+$ denotes the orthogonal projection from
$L^2(\mathbb T,\mathbb C^m)$ onto
$H^2(\mathbb D,\mathbb C^m)$.
\end{definition}

The vector-valued Hardy space also provides the natural setting for matrix-valued inner functions and their associated model spaces. Since these model spaces play a central role in the matrix-valued TTOs considered below, we recall the relevant definitions.

The space of \(n\times n\) matrices with each entry in
\(H^\infty(\mathbb D)\) is denoted by
\(H^\infty(\mathbb D)^{n\times n}\). A matrix-valued function
\(\Theta\) in \(H^\infty(\mathbb D)^{n\times n}\) is called ``inner" if the
Toeplitz operator \(T_\Theta\) is an isometry. Let \(\Theta\) be a
matrix-valued inner function, then the associated vector-valued model space is defined by
\[
\cl K_\Theta
:=
H^2(\mathbb D,\mathbb C^m)
\ominus
\Theta H^2(\mathbb D,\mathbb C^m).
\]

We will also need the shift and backward shift operators. The latter is closely connected with the notion of nearly invariant subspaces used throughout the paper.

\begin{definition}
The \textit{shift operator} on
$H^2(\mathbb{D},\mathbb{C}^m)$, denoted by $S$, is defined by
\begin{eqnarray*}
    (SF)(z)=zF(z),
    \qquad F\in H^2(\mathbb{D},\mathbb{C}^m),
\end{eqnarray*}
and its adjoint, denoted by $S^*$ and called the \textit{backward shift operator}, is given by
\begin{eqnarray*}
    (S^*F)(z)=\frac{F(z)-F(0)}{z},
    \qquad F\in H^2(\mathbb{D},\mathbb{C}^m).
\end{eqnarray*}
\end{definition}

Throughout the paper, the term ``subspace'' refers to a closed subspace. The backward shift leads naturally to the notion of nearly invariant subspaces. Unlike invariant subspaces, a nearly invariant subspace is required to be preserved by the backward shift only for those elements that vanish at the origin, possibly up to a finite-dimensional defect. This finite-dimensional defect will be essential in our analysis of finite-rank perturbations.

\begin{definition}
A subspace $\mathcal M$ of $H^2(\mathbb D,\mathbb C^m)$ is said to be
\emph{nearly $S^*$-invariant with finite defect $d$} if there exists a
$d$-dimensional subspace $\mathcal D$ (orthogonal to $\mathcal M$) such that
\begin{align}\label{FD}
F \in \cl M \text{ with } F(0)=0
\implies
S^*(F)\in\mathcal M\oplus\mathcal D.
\end{align}
A finite-dimensional subspace of the smallest dimension that satisfies
\eqref{FD} is called the \textit{defect space} of $\mathcal M$, and its
dimension is called the \textit{defect} of $\mathcal M$. If $d=0$, the we say $\cl M$ is a nearly $S^*$-invariant subspace.
\end{definition}

We now turn to TTOs. These operators are obtained by compressing multiplication operators to model spaces and constitute the principal operator class considered in this paper. We begin with the scalar-valued setting.

\begin{definition}
Let $\theta$ be an inner function and let $g\in L^\infty(\mathbb T)$.
Then the TTO with symbol $g$ on $\cl K_\theta$ is defined by
$$
A_g^\theta f
=
P_\theta(gf),
\qquad
f\in \cl K_\theta,
$$
where $P_\theta$ denotes the orthogonal projection from
$L^2(\mathbb T)$ onto $\cl K_\theta$.
\end{definition}
The scalar-valued theory extends naturally to the vector-valued setting by replacing scalar inner functions and symbols with matrix-valued ones. Since this formulation provides the framework for realizing the finite-rank perturbations considered in this paper, we recall it next.

\begin{definition}
Let $\Theta$ be an $n\times n$ matrix-valued inner function and let
$G\in L^\infty(\mathbb T)^{n\times n}$. Then the matrix-valued TTO with symbol $G$ on $\cl K_\Theta$ is defined by
$$
A_G^\Theta F
=
P_\Theta(GF),
\qquad
F\in \cl K_\Theta,
$$
where $P_\Theta$ denotes the orthogonal projection from
$L^2(\mathbb T,\mathbb C^n)$ onto $\cl K_\Theta$.
\end{definition}

Thus, the matrix-valued formulation retains the compression structure of the scalar TTO while allowing interactions between different coordinates. This additional flexibility will be useful when a TTO is coupled with the finite-dimensional coordinates arising from a perturbation.

We next introduce the finite-rank operators that will be used throughout the paper. Rank-one operators provide the basic building blocks for the perturbations under consideration.

\begin{definition}
Let $\mathcal H$ be a Hilbert space and let $p,q\in\mathcal H$. The
\emph{rank-one operator} $p\otimes q$ on $\mathcal H$ is defined by
$$
(p\otimes q)f
=
\langle f,q\rangle p,
\qquad
f\in\mathcal H.
$$
In general, a finite-rank operator of rank at most $n$ can be written in the
form
$$
\sum_{j=1}^n p_j\otimes q_j,
\qquad
p_j,q_j\in\cl H.
$$
\end{definition}

With this notation, the finite-rank perturbations considered in the paper take the form
$$
A_g^\theta-\sum_{j=1}^n p_j\otimes q_j,
\qquad
p_j,q_j\in \cl K_\theta.
$$
The vectors $p_j$ and $q_j$ allows these perturbation terms to be incorporated into a matrix-valued symbol on a suitably enlarged model space. More precisely, each rank-one term introduces a finite-dimensional coordinate. This suggests enlarging the scalar model space $\cl K_\theta$ by adjoining a finite-dimensional component. To implement this idea, we consider the following matrix-valued inner function:
$$
\Theta_n
=
\operatorname{diag}
\big(
\theta,
\underbrace{z,\ldots,z}_{n\text{ copies}}
\big).
$$
The corresponding model space is given by
\[
\cl K_{\Theta_n}
=
\cl K_\theta\oplus\mathbb C^n.
\]

This particular choice of $\Theta_n$ is convenient because
$K_z=\mathbb C$, so that each additional copy of $z$ contributes precisely one finite-dimensional coordinate. Consequently, the $n$-dimensional component $\mathbb C^n$ records the finite-rank data of the perturbation, while the first coordinate retains the original model space $\cl K_\theta$. This enlargement will be used in Section \ref{Tru} to realize the finite-rank perturbation as a matrix-valued TTO.

We conclude this section by introducing TTOs on multiband spaces.

\begin{definition}
Let $\varphi,\psi\in L^\infty(\mathbb T)$ be unimodular functions satisfying
$\varphi\cl K_\theta\perp\psi \cl K_\theta$, then the subspace
$\mathcal M:=\varphi \cl K_\theta\oplus\psi \cl K_\theta$ is called a multiband space.
\end{definition}

Every element $f\in\mathcal M$ has a unique representation
$f=\varphi f_1+\psi f_2$, where $f_1,f_2\in \cl K_\theta$. Moreover, the orthogonality of the two summands and the unimodularity of $\varphi$ and $\psi$ give
\[
\|\varphi f_1+\psi f_2\|^2
=
\|f_1\|_2^2+\|f_2\|_2^2.
\]
Hence the map
\[
U:\mathcal M\longrightarrow \cl K_\theta(\mathbb D,\mathbb C^2)
\]
defined by
$$
U(\varphi f_1+\psi f_2)
=
\begin{pmatrix}
f_1\\
f_2
\end{pmatrix}
$$
is unitary.

\begin{definition}
Suppose $g\in L^\infty(\mathbb T)$, then the TTO on the multiband space $\mathcal M$ is defined by
$$
A_g^{\mathcal M}f
=
P_{\mathcal M}(gf),
\qquad
f\in\mathcal M,
$$
where $P_{\mathcal M}$ denotes the orthogonal projection onto $\mathcal M$.
\end{definition}

Under the unitary map $U$, the operator $A_g^{\mathcal M}$ is represented by a $2\times2$ matrix-valued TTO on $\cl K_\theta(\mathbb D,\mathbb C^2)$. Thus, for a suitable matrix-valued symbol
$$
G=
\begin{pmatrix}
g_{11}&g_{12}\\
g_{21}&g_{22}
\end{pmatrix},
\qquad
g_{ij}\in L^\infty(\mathbb T),
$$
we have
$$
UA_g^{\mathcal M}U^*
=
A_G^\theta.
$$

For a perturbation of rank at most $n$ in the multiband setting, the same enlargement principle gives
$$
\Phi_n
=
\operatorname{diag}
\big(
\theta,\theta,
\underbrace{z,\ldots,z}_{n\text{ copies}}
\big),
$$
for which
\[
\cl K_{\Phi_n}
=
\cl K_\theta(\mathbb D,\mathbb C^2)\oplus\mathbb C^n.
\]
This enlarged matrix-valued model space provides the framework used in Section \ref{Mul} to analyze kernels of finite-rank perturbations of TTOs on multiband spaces.

\section{Matrix-valued Truncated Toeplitz Operators}\label{Mat}
The kernel of a matrix-valued truncated Toeplitz operator is naturally
connected with the kernel of a suitable Toeplitz operator acting on a
larger vector-valued Hardy space. This connection provides a way to
transfer structural properties of Toeplitz kernels to matrix-valued
truncated Toeplitz operators. In particular, by realizing the latter as
coordinate projection of Toeplitz kernels, we can bring the nearly
$S^*$-invariant structure of Toeplitz kernels into the truncated setting.
The following lemma provides the realization that will be used throughout
the subsequent analysis.

\begin{lemma}[Toeplitz-kernel realization] \label{R31}
Suppose $\Theta$ is an $n\times n$ matrix-valued inner function and let
$G\in  L^\infty(\mathbb T)^{n\times n}$. Define $\mathcal G:=
\begin{pmatrix}
\Theta^*&0\\
G&\Theta
\end{pmatrix}$, where $\Theta ^*$ denotes the conjugate transpose of $\Theta$.
Then
\begin{align*}
   \ker A_G^\Theta = P_n(\ker T_{\mathcal G}), 
\end{align*}
where $P_n:H^2(\mathbb D,\mathbb C^{2n})
\longrightarrow H^2(\mathbb D,\mathbb C^n)$
denotes the orthogonal projection onto the first $n$ coordinates.
\end{lemma}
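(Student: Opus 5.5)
The plan is to compute $T_{\mathcal G}$ explicitly on a pair $(F_1,F_2)\in H^2(\mathbb D,\mathbb C^n)\oplus H^2(\mathbb D,\mathbb C^n)=H^2(\mathbb D,\mathbb C^{2n})$. Then I will show that each of the two coordinate equations defining $\ker T_{\mathcal G}$ translates into one of the two conditions defining $\ker A_G^\Theta$. The first condition is membership in $\cl K_\Theta$; the second is the vanishing of the compression. Block multiplication gives
\[
T_{\mathcal G}\begin{pmatrix}F_1\\ F_2\end{pmatrix}
=\begin{pmatrix}P_+(\Theta^*F_1)\\ P_+(GF_1+\Theta F_2)\end{pmatrix}
=\begin{pmatrix}P_+(\Theta^*F_1)\\ P_+(GF_1)+\Theta F_2\end{pmatrix},
\]
where we used $\Theta F_2\in H^2(\mathbb D,\mathbb C^n)$ because $\Theta\in H^\infty(\mathbb D)^{n\times n}$.

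\textbf{Step 1: the first coordinate.} I will show that $P_+(\Theta^*F_1)=0$ if and only if $F_1\in\cl K_\Theta$. Since $\Theta$ is inner, it is unitary a.e. on $\mathbb T$. Hence for every $H\in H^2(\mathbb D,\mathbb C^n)$ we have
\[
\langle \Theta^*F_1,H\rangle_{L^2}=\langle F_1,\Theta H\rangle_{L^2}.
\]
So $\Theta^*F_1\perp H^2(\mathbb D,\mathbb C^n)$ exactly when $F_1\perp\Theta H^2(\mathbb D,\mathbb C^n)$, that is, exactly when $F_1\in\cl K_\Theta$.

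\textbf{Step 2: the second coordinate.} For $F_1\in\cl K_\Theta$, I will show that there exists $F_2\in H^2(\mathbb D,\mathbb C^n)$ with $P_+(GF_1)=-\Theta F_2$ if and only if $P_\Theta(GF_1)=0$. The key observation is that $\cl K_\Theta\subset H^2(\mathbb D,\mathbb C^n)$, so $P_\Theta=P_\Theta P_+$ on $L^2(\mathbb T,\mathbb C^n)$. Also $H^2(\mathbb D,\mathbb C^n)=\cl K_\Theta\oplus\Theta H^2(\mathbb D,\mathbb C^n)$. Therefore
\[
P_\Theta(GF_1)=0\iff P_+(GF_1)\in\Theta H^2(\mathbb D,\mathbb C^n).
\]
If $P_+(GF_1)=\Theta H$ with $H\in H^2(\mathbb D,\mathbb C^n)$, set $F_2:=-H$; equivalently $F_2=-\Theta^*P_+(GF_1)$, using $\Theta^*\Theta=I$ a.e. on $\mathbb T$. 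Conversely, if $P_+(GF_1)+\Theta F_2=0$, then $P_+(GF_1)=\Theta(-F_2)\in\Theta H^2(\mathbb D,\mathbb C^n)$.

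\textbf{Step 3: the two inclusions.} If $(F_1,F_2)\in\ker T_{\mathcal G}$, then Steps 1 and 2 give $F_1\in\cl K_\Theta$ and $A_G^\Theta F_1=0$. Hence $P_n(\ker T_{\mathcal G})\subseteq\ker A_G^\Theta$. Conversely, for $F_1\in\ker A_G^\Theta$, the pair $(F_1,-\Theta^*P_+(GF_1))$ lies in $\ker T_{\mathcal G}$ and projects onto $F_1$, which gives the reverse inclusion.

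The only point needing real care is Step 2, in the direction producing $F_2$. One must justify that $P_+(GF_1)\in\Theta H^2(\mathbb D,\mathbb C^n)$ yields an analytic $F_2$, and not merely an $L^2$ function $\Theta^*P_+(GF_1)$. This follows from the description $\Theta H^2(\mathbb D,\mathbb C^n)=\cl K_\Theta^\perp\cap H^2(\mathbb D,\mathbb C^n)$, which is immediate from the definition of $\cl K_\Theta$ as an orthogonal complement of a closed subspace (closed because $T_\Theta$ is an isometry). The remaining steps are routine boundary-value manipulations using that $\Theta$ is unitary a.e. on $\mathbb T$.
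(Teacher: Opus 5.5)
Your proof is correct and follows essentially the same route as the paper. Both arguments read off the two block rows of $T_{\mathcal G}$, identify $P_+(\Theta^*F_1)=0$ with $F_1\in\cl K_\Theta$ via $\langle\Theta^*F_1,H\rangle=\langle F_1,\Theta H\rangle$, and identify solvability of the second row with $P_+(GF_1)\in\Theta H^2(\mathbb D,\mathbb C^n)$ via $P_\Theta=P_\Theta P_+$; your explicit choice $F_2=-\Theta^*P_+(GF_1)$ and your remark that $\Theta H^2(\mathbb D,\mathbb C^n)$ is closed only make explicit what the paper leaves implicit.
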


\begin{proof}
Suppose that $ F\in\ker T_{\mathcal G}$, then there exist $f,h \in H^2(\mathbb D,\mathbb C^{n})$ such that $\displaystyle{F=\binom{f}{h}\in H^2(\mathbb D,\mathbb C^{2n})}$ and 
\begin{align} \label{Leq}
  P_+(\Theta^*f)=0 \text{ and } P_+(Gf + \Theta h)=0,
\end{align}
where $P_+$ is the orthogonal projection of $L^2(\bb T, \bb C^n)$ onto $H^2(\bb D, \bb C^n)$.
The first condition in (\ref{Leq}) shows that $\Theta^*f \in L^2(\bb T, \bb C^n) \ominus H^2(\bb D, \bb C^n)$. Now, for any $u \in H^2(\bb D, \bb C^n)$
\begin{align*}
 \langle f, \Theta u \rangle = \langle  \Theta^*f, u \rangle= 0.
 \end{align*}
This shows that $f \in \cl K_{\Theta}$. Further, the second condition in (\ref{Leq}) gives $P_+(Gf + \Theta h)=0$, which shows that  $P_+(Gf)=-\Theta h\in\Theta H^2(\bb D, \bb C^n)$. Also, since $P_{\Theta}= P_{\Theta} P_{+}$ on $L^2(\bb T, \bb C^n)$, therefore we have 
\begin{align*}
A_G^\Theta f
=
P_\Theta P_+(Gf)
=0.   
\end{align*}
This shows that $f=P_n(F) \in \ker A^\Theta _G$. Therefore, we obtain
\begin{align} \label{eqn31}
 P_n(\ker T_{\mathcal G})
\subseteq\ker A_G^\Theta .  
\end{align}

 Conversely, suppose $f\in\ker A_G^\Theta$. Then $P_\Theta P_+(Gf)=0$. This shows that $P_+(Gf)\in\Theta H^2(\bb D, \bb C^n)$. Thus, there exists $h\in H^2(\mathbb D,\mathbb C^n)$ such that $P_+(Gf)=-\Theta h$. Consequently,
 \begin{align} \label{eqn32}
   P_+(Gf+\Theta h)=0.  
 \end{align}
Also, since $f\in \cl K_\Theta$, therefore
\begin{align} \label{eqn33}
 P_+(\Theta^* f)=0.   
\end{align}
Thus from (\ref{eqn32}) and (\ref{eqn33}), we obtain 
 $\displaystyle{\binom{f}{h}\in\ker T_{\mathcal G}}$, and therefore $f\in P_n(\ker T_{\mathcal G})$. Consequently 
\begin{align} \label{eqn34}
    \ker A_G^\Theta \subset P_n(\ker T_{\mathcal G}). 
\end{align}
Hence, from (\ref{eqn31}) and (\ref{eqn34}) , we conclude $\ker A_G^\Theta=P_n(\ker T_{\mathcal G})$.

\end{proof}

The preceding realization allows us to exploit the structure of kernels of matrix-valued truncated Toeplitz operators in terms of certain Toeplitz kernels. The following result shows that, for a matrix-valued inner function of the from $\Theta_n
=
\operatorname{diag}
\big(
\theta,
\underbrace{z,\ldots,z}_{n\text{ copies}}
\big)$, this structure yields a near $S^*$-invariance with finite defect, and the associated defect is controlled explicitly by the number of additional coordinates.

\begin{theorem}\label{R32}
Suppose $\Theta_n
=
\operatorname{diag}
\big(
\theta,
\underbrace{z,\ldots,z}_{n\text{ copies}}
\big)$ is an $(n+1)\times(n+1)$ matrix-valued inner function, and let $G_n\in L^\infty(\mathbb T)^{(n+1)\times(n+1)}$. Then $\ker A_{G_n}^{\Theta_n}$ is a nearly $S^*$-invariant subspace with defect at most $n+1$.
\end{theorem}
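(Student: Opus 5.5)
We argue directly with the structure of $\cl K_{\Theta_n}=\cl K_\theta\oplus\mathbb C^n$, keeping Lemma \ref{R31} in the background as the conceptual justification. Write $F=(f_0,c_1,\dots,c_n)\in\ker A_{G_n}^{\Theta_n}$ with $f_0\in\cl K_\theta$ and $c_j\in\mathbb C$. The plan rests on two observations. First, $S^*$ acts coordinatewise: $S^*F=(S^*f_0,0,\dots,0)$, because constants are annihilated. Second, $\cl K_{\Theta_n}$ is $S^*$-invariant, so $S^*F\in\cl K_{\Theta_n}$ whenever $F\in\cl K_{\Theta_n}$. Hence the only issue is whether $A_{G_n}^{\Theta_n}S^*F=0$ when $F(0)=0$.

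\textbf{Step 1.} Take $F\in\ker A_{G_n}^{\Theta_n}$ with $F(0)=0$. Then $c_j=0$ for all $j$ and $f_0(0)=0$, so $F=zS^*F$ with $S^*F\in\cl K_{\Theta_n}$. Writing $H=S^*F$, we have $0=P_{\Theta_n}(G_n zH)$. We then want to compare this with $A_{G_n}^{\Theta_n}H=P_{\Theta_n}(G_nH)$. To do so, use the identity $P_{\Theta}(zU)=zP_\Theta U-\langle\cdot\rangle$, more precisely the standard relation $P_\Theta S P_\Theta = S_\Theta$ (the compressed shift), together with the fact that $S_\Theta^*=S^*|_{\cl K_\Theta}$ and $I-S_\Theta S_\Theta^*$ has rank at most $n+1$ on $\cl K_{\Theta_n}$. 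The last claim holds because it equals the projection onto $\cl K_\Theta\ominus z\cl K_\Theta$-type data, namely the span of the columns of $k_0^\Theta=(I-\Theta\Theta(0)^*)\cdot$, which has dimension at most $n+1$.

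\textbf{Step 2.} Derive the key identity. Since $zH\in\cl K_{\Theta_n}$ (it equals $F$), we have $A_{G_n}^{\Theta_n}(zH)=A_{G_n}^{\Theta_n}S_{\Theta_n}H$. The commutator-type relation for matrix TTOs gives
\[
A_{G_n}^{\Theta_n}S_{\Theta_n}-S_{\Theta_n}A_{G_n}^{\Theta_n}=\text{operator with range in }\mathcal R,
\]
where $\mathcal R$ is spanned by the columns of the reproducing kernel $k_0^{\Theta_n}$ and of $\widetilde k_0^{\Theta_n}=S^*\Theta_n\cdot$. We do not want all of that, however. The cleaner route is to apply $S_{\Theta_n}^*$: from $0=P_{\Theta_n}(G_nF)$ and $S^*P_{\Theta_n}=P_{\Theta_n}S^*$ on $\cl K_{\Theta_n}$-targets modulo $\mathrm{span}\{S^*\Theta_n e_k\}$, we get that $P_{\Theta_n}(G_nH)=P_{\Theta_n}(\bar z G_nF)$ differs from $S^*P_{\Theta_n}(G_nF)=0$ by a vector in the span of $\{P_{\Theta_n}(\bar z\,\Theta_n e_k)\}$ and of the $(G_nF)$-dependent constant terms. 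Exploiting $\Theta_n=\operatorname{diag}(\theta,z,\dots,z)$ reduces this span to at most $n+1$ fixed vectors $v_0,\dots,v_n$ (one per coordinate), independent of $F$.

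\textbf{Step 3.} Conclude. We obtain $S^*F\in\{H\in\cl K_{\Theta_n}: A_{G_n}^{\Theta_n}H\in\mathrm{span}\{v_k\}\}$, and this set is $\ker A_{G_n}^{\Theta_n}$ plus a space of dimension at most $n+1$: choose preimages of the attainable part of $\mathrm{span}\{v_k\}$. Projecting orthogonally onto $(\ker A_{G_n}^{\Theta_n})^\perp$ then gives a defect space $\mathcal D$ of dimension at most $n+1$.

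The main obstacle is Step 2: pinning down exactly which fixed $(n+1)$-dimensional span absorbs the discrepancy between $P_{\Theta_n}(G_nS^*F)$ and $S^*P_{\Theta_n}(G_nF)$. Rather than expanding $P_{\Theta_n}(\bar z G_nF)$ abstractly, I would write it concretely via $P_{\Theta_n}=P_+-\Theta_nP_+\Theta_n^*$. This yields an error of the form $\Theta_n S^*$ minus $S^*\Theta_n$ applied to $P_+\Theta_n^*G_nF$, which equals $(\Theta_n-\Theta_n(0))\bar z\cdot$(constant vector in $\mathbb C^{n+1}$), up to $P_{\Theta_n}$. This exhibits the $n+1$ vectors $P_{\Theta_n}(S^*\Theta_n e_k)$ explicitly.
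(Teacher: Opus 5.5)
Your argument is correct, but it takes a genuinely different route from the paper. The paper never computes inside $\mathcal K_{\Theta_n}$. It uses Lemma~\ref{R31} to write $\ker A_{G_n}^{\Theta_n}=P_N(\ker T_{\mathcal G_n})$ with $N=n+1$, and it uses the exact near $S^*$-invariance of the block Toeplitz kernel $\ker T_{\mathcal G_n}\subset H^2(\mathbb D,\mathbb C^{2N})$. It then bounds the defect of the projection by a point-evaluation rank count, i.e.\ by the number $N$ of discarded coordinates.

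You stay on the model space. The real content of your proof is the identity sketched in your last paragraph. For $\Phi\in L^2(\mathbb T,\mathbb C^{N})$ and $u=P_+(\Theta_n^*\Phi)$,
\[
P_{\Theta_n}(\bar z\Phi)=S^*P_{\Theta_n}\Phi+\bar z\bigl(\Theta_n-\Theta_n(0)\bigr)u(0).
\]
This follows from three facts:
\begin{itemize}
\item $P_{\Theta_n}=P_+-\Theta_nP_+\Theta_n^*$;
\item $P_+(\bar z\Psi)=S^*P_+\Psi$ for $\Psi\in L^2$;
\item $S^*(\Theta_nu)-\Theta_nS^*u=\bar z(\Theta_n-\Theta_n(0))u(0)$.
\end{itemize}
Take $\Phi=G_nF$ with $F\in\ker A_{G_n}^{\Theta_n}$ and $F(0)=0$, so that $S^*F=\bar zF$ on $\mathbb T$. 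The identity then gives
\[
A_{G_n}^{\Theta_n}S^*F\in\mathcal R:=\{\bar z(\Theta_n-\Theta_n(0))c:\ c\in\mathbb C^{N}\}.
\]
Your Step~3 then produces a defect space of dimension at most $\dim(\mathcal R\cap\operatorname{Ran}A_{G_n}^{\Theta_n})\le N$.

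You should present the proof in exactly this form and drop the detours in Steps~1--2:
\begin{itemize}
\item The commutator with $S_{\Theta_n}$ and the rank of $I-S_{\Theta_n}S_{\Theta_n}^*$ are never used, and the latter is not a projection in general.
\item The vague ``$(G_nF)$-dependent constant terms'' are nothing but the coefficient vector $u(0)$. That is precisely why the absorbing span is fixed, independent of $F$.
\item The diagonal form of $\Theta_n$ plays no role in your argument, nor in the paper's. Both give defect at most $N$ for any $N\times N$ inner $\Theta$.
\end{itemize}

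As for what each approach buys:
\begin{itemize}
\item Your route is self-contained. It needs neither Lemma~\ref{R31} nor the near invariance of Toeplitz kernels with matrix symbols. It also describes the defect space intrinsically: take $A_{G_n}^{\Theta_n}$-preimages of $\operatorname{Ran}A_{G_n}^{\Theta_n}\cap\mathcal R$ and project them onto $(\ker A_{G_n}^{\Theta_n})^\perp$.
\item The paper's route expresses the defect space through the projections $P_NW_j$ of elements of the lifted Toeplitz kernel. This is the form that matches Theorem~\ref{PVVD} and the coordinate-projection estimate of Theorem~\ref{R42}, the machinery the paper reuses for Theorems~\ref{R43} and~\ref{R52}.
\end{itemize}
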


\begin{proof}
Firstly, put $N=n+1$ and consider the $2N\times 2N$ matrix-valued symbol 
\begin{align*}
\mathcal G_n
=
\begin{pmatrix}
\Theta^*_n & 0\\
G_n & \Theta_n
\end{pmatrix}.    
\end{align*}
Let $T_{\mathcal G_n}$ denotes the corresponding Toeplitz operator on $H^2(\mathbb D,\mathbb C^{2N})$. Then by Lemma \ref{R31}, $$\ker A_{G_n}^{\Theta_n}= P_N\left(\ker T_{\mathcal G_n}\right),$$ where $P_N:
H^2(\mathbb D,\mathbb C^{2N})
\longrightarrow H^2(\mathbb D,\mathbb C^{N})$ is the orthogonal projection onto the first $N$ coordinates.

Define $\cl W:=\ker T_{\mathcal G_n}
\ominus\left(\ker T_{\mathcal G_n} \cap zH^2(\mathbb D,\mathbb C^{2N})
\right)$, and consider the point evaluation map $$\phi : \cl W \to \mathbb C^{2N} \text{ defined by } \phi(F)=F(0).$$ Indeed, if $F\in W$ such that $F(0)=0$, then $F\in zH^2(\mathbb D,\mathbb C^{2N})$, and therefore $F\in
\ker T_{\mathcal G_n} \cap zH^2(\mathbb D,\mathbb C^{2N})$. Thus, we must have  $F=0$. This shows that $\phi$ is injective. Consequently,
\begin{align*}
    \dim \cl W=\dim \cl W(0)=r \text{ (say) }.
\end{align*}
Suppose $f\in \ker A_{G_n}^{\Theta_n}$ with $f(0)=0$, then there exists $F \in \ker T_{\mathcal G_n}$ such that $P_NF=f$. Choose $W_1,\ldots,W_r\in \cl W$ such that $W_1(0),\ldots,W_r(0)$ 
form a basis of $\cl W(0)$. Using the decomposition 
\begin{align*}
 \ker T_{\mathcal G_n}=\cl W \oplus
\left(
\ker T_{\mathcal G_n} \cap
zH^2(\mathbb D,\mathbb C^{2N})
\right),   
\end{align*}
we can express $F$ as
\begin{align*}
 F=\sum_{j=1}^r a_jW_j+F_0,   
\end{align*}
where $a_1,\ldots,a_r\in\mathbb C$ and $F_0\in
\ker T_{\mathcal G_n}  \cap
zH^2(\mathbb D,\mathbb C^{2N})$. Also, since $f(0)=P_NF(0)=0$ and $F_0(0)=0$, we obtain $P_N\left(\sum_{j=1}^r a_jW_j(0)
\right)=0$. That is; $P_N
\left(
\sum_{j=1}^r a_jW_j
\right)
\in
zH^2(\mathbb D,\mathbb C^N)$. Therefore, it follows that
\begin{align*}
 S^*f
=
\frac{f}{z}
=
\frac{
P_N\left(\sum_{j=1}^r a_jW_j\right)
}{z}
+
P_N\left(\frac{F_0}{z}\right).   
\end{align*}
Now, since $F_0\in \ker T_{\mathcal G_n}$ with $F_0(0)=0$ and $\ker T_{\mathcal G_n} $ is nearly $S^*$-invariant, therefore we have
\begin{align*}
 \frac{F_0}{z}
=
S^*F_0
\in \ker T_{\mathcal G_n},  
\end{align*}
and hence
\begin{align*}
 P_N\left(\frac{F_0}{z}\right)
\in
P_N(\ker T_{\mathcal G_n})
=\ker A_{G_n}^{\Theta_n}.   
\end{align*}
Therefore,
\begin{align*}
  S^*f\in \ker A_{G_n}^{\Theta_n}+\mathcal D,  \text{ where } \mathcal D
=
\left(
\frac{
\operatorname{span}
\{P_NW_1,\ldots,P_NW_r\}
}{z}
\cap
H^2(\mathbb D,\mathbb C^N)
\right).
\end{align*}
The finite-dimensional space $\cl D$ is a valid defect space in the sense of the defining near invariance; if necessary, it may be replaced by its orthogonal
projection onto the orthogonal complement of $\ker A_{G_n}^{\Theta_n}$,
without increasing its dimension.

Furthermore, we estimate the dimension of $\cl D$. Consider the  $2N\times r$ matrix $V:=\begin{bmatrix}
W_1(0)&\cdots&W_r(0)
\end{bmatrix}$. Since $W_1(0),\ldots,W_r(0)$ are linearly independent, therefore $\operatorname{rank}V=r$. Now, a linear combination $\sum_{j=1}^r a_jP_NW_j$
belongs to $zH^2(\mathbb D,\mathbb C^N)$ if and only if its value at $0$ is zero. That is,
\begin{align*}
    \sum_{j=1}^r a_jP_NW_j
\in
zH^2(\mathbb D,\mathbb C^N) \iff P_NV
\begin{pmatrix}
a_1\\
\vdots\\
a_r
\end{pmatrix}
=0.
\end{align*}
This shows that $\dim\mathcal D
\leq
\dim\ker(P_NV)$. Now, by the rank-nullity theorem, $\dim\ker(P_NV)=
r-\operatorname{rank}(P_NV)$. Write $V=
\begin{pmatrix}
V_1\\
V_2
\end{pmatrix}$, where $V_1=P_NV$ consists of the first $N$ rows of $V$, while $V_2$ consists of the remaining $N$ rows. Since $\operatorname{rank}V=r$, therefore $r = \operatorname{rank}V
\leq \operatorname{rank}V_1+ \operatorname{rank}V_2$. But $V_2$ has only $N$ rows, and therefore $\operatorname{rank}V_2\leq N$. This shows that $   r
\leq
\operatorname{rank}(P_NV)+N$; that is, $ r-\operatorname{rank}(P_NV)\leq N$.
Consequently,
\begin{align*}
 \dim\mathcal D\leq N=n+1.   
\end{align*}
Hence, we have shown $\ker A_{G_n}^{\Theta_n}$ is a nearly $S^*$-invariant subspace with defect at most $n+1$.
\end{proof}

\section{Kernels of finite rank perturbations of Truncated Toeplitz Operators}\label{Tru}

Consider an inner function $\theta$ and the associated model space $\cl K_\theta :=H^2(\bb D)\ominus \theta H^2(\bb D)$. Let $P_\theta:L^2(\mathbb T)\to \cl K_\theta$ is the orthogonal projection onto $\cl K_\theta$. Then for $g\in L^\infty(\mathbb T)$,  the truncated Toeplitz operator on $\cl K_\theta$, denoted by $ A_g^\theta$,  is defined as 
\begin{align*}
    A_g^\theta f=P_\theta(gf),\qquad f\in \cl K_\theta.
\end{align*}
Let $n \in \mathbb{N}$ and  $p_j,q_j \in  \cl K_\theta$ for $i=1, \dots n$. The perturbed truncated Toeplitz operator of rank at most $n$ is given by
\begin{align*}
    \widetilde{A_n}:=A_g^\theta-\sum_{j=1}^n p_j\otimes q_j,
\end{align*}
where $(p_j\otimes q_j)f=\langle f,q_j\rangle p_j$.

Further we define the two crucial $(n+1) \times (n+1)$ matrices $\Theta_n$ and $G_n$ as follows: 
\begin{align*}
 \Theta_n:=
\begin{pmatrix}
\theta & 0 & \cdots & 0\\
0 & z & \cdots & 0\\
\vdots & \vdots & \ddots & \vdots\\
0 & 0 & \cdots & z
\end{pmatrix}_{(n+1) \times (n+1)},   
\end{align*}
where the scalar function $z$ occurs $n$ times on the diagonal,
and a matrix-valued function
\begin{align*}
G_n:=
\begin{pmatrix}
g & -p_1 & -p_2 & \cdots & -p_n\\
\overline{q_1} & -1 & 0 & \cdots & 0\\
\overline{q_2} & 0 & -1 & \cdots & 0\\
\vdots & \vdots & \vdots & \ddots & \vdots\\
\overline{q_n} & 0 & 0 & \cdots & -1
\end{pmatrix}_{(n+1) \times (n+1)}.    
\end{align*}
Since $\cl K_z=H^2(\bb D) \ominus zH^2(\bb D)=\mathbb C$, therefore
the model space associated to $\Theta_n$, denoted by $\cl K_{\Theta_n}$, is given by $$\cl K_{\Theta_n}=\cl K_\theta\oplus\mathbb C^n.$$ Then the matrix-valued truncated Toeplitz operator $A^{\Theta_n}_{G_n}: \cl K_{\Theta_n} \longrightarrow  \cl K_{\Theta_n}$ is defined by 
\begin{align*}
   A^{\Theta_n}_{G_n}(F)&=P_{\Theta_n}(G_n F), \text{ where } F \in \cl K_{\Theta_n}.
\end{align*}
That is, for an element $F=
\begin{pmatrix}
f\\ c_1\\ \vdots\\ c_n
\end{pmatrix}$ in $ \cl K_{\Theta_n}$, where $f \in \cl K_\theta$ and $c_1, \dots , c_n \in \mathbb{C}$, one has 
\begin{align*}
 A_{G_n}^{\Theta_n}(F)
=P_{\Theta_n}\begin{pmatrix}
g & -p_1 & -p_2 & \cdots & -p_n\\
\overline{q_1} & -1 & 0 & \cdots & 0\\
\overline{q_2} & 0 & -1 & \cdots & 0\\
\vdots & \vdots & \vdots & \ddots & \vdots\\
\overline{q_n} & 0 & 0 & \cdots & -1
\end{pmatrix}
\begin{pmatrix}
f\\ c_1\\c_2\\ \vdots\\ c_n\\
\end{pmatrix}=
\begin{pmatrix}
P_\theta\bigg(
gf-\displaystyle\sum_{j=1}^n c_jp_j
\bigg)
\\
P_z\left(\overline{q_1}f-c_1\right)
\\
\vdots
\\
P_z\left(\overline{q_n}f-c_n\right)
\end{pmatrix}.
\end{align*}

\vspace{0.4 cm}

We now use the enlarged model space to encode the finite-rank
perturbation. The purpose of this construction is to investigate 
the nearly $S^*$-invariant structure of kernels for these finite rank perturbations. By introducing one additional coordinate for each
rank-one term, we obtain a matrix-valued truncated Toeplitz operator whose
kernel projects onto the kernel of the original perturbed operator. The
following theorem makes this correspondence precise.

\begin{theorem} \label{R41}
 Suppose $\theta$ is an inner function, $g\in L^\infty(\mathbb T)$, and $p_j,q_j\in \cl K_\theta$ for $j=1, \dots , n$. Let $ \widetilde{A_n}$ be the perturbed truncated Toeplitz operator given by $\displaystyle{\widetilde{A_n}:=A_g^\theta-\sum_{j=1}^n p_j\otimes q_j}$. Then $$\ker  \widetilde{A_n}=P_1\bigl(\ker A_{G_n}^{\Theta_n}\bigr),$$ where $P_1$ denotes the orthogonal projection onto the first coordinate.   
\end{theorem}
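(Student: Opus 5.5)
The plan is a direct double inclusion, computed from the explicit formula for $A_{G_n}^{\Theta_n}$ displayed just before the statement. The one fact needed is how $P_z$ acts. Since $\cl K_z=\mathbb C$ (the constants), $P_z h=\langle h,1\rangle=\widehat h(0)$ for $h\in L^2(\mathbb T)$. For $f,q_j\in \cl K_\theta$ this gives
\begin{align*}
P_z(\overline{q_j}f-c_j)=\int_{\mathbb T} f\,\overline{q_j}\,dm-c_j=\langle f,q_j\rangle-c_j .
\end{align*}
Also $P_\theta$ is linear and $c_jp_j\in\cl K_\theta$, so the first coordinate of $A_{G_n}^{\Theta_n}F$ equals $A_g^\theta f-\sum_{j}c_jp_j$.

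\textbf{Step 1: describe the kernel of $A_{G_n}^{\Theta_n}$.} By the computation above, $F=(f,c_1,\dots,c_n)^T\in\cl K_{\Theta_n}$ lies in $\ker A_{G_n}^{\Theta_n}$ if and only if $c_j=\langle f,q_j\rangle$ for every $j$ and
\begin{align*}
A_g^\theta f-\sum_{j=1}^n c_jp_j=0 .
\end{align*}

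\textbf{Step 2: the inclusion $P_1(\ker A_{G_n}^{\Theta_n})\subseteq\ker\widetilde{A_n}$.} Take $F\in\ker A_{G_n}^{\Theta_n}$. Substituting $c_j=\langle f,q_j\rangle$ from Step 1 gives
\begin{align*}
\widetilde{A_n}f=A_g^\theta f-\sum_{j=1}^n\langle f,q_j\rangle p_j=0,
\end{align*}
so $f=P_1F\in\ker\widetilde{A_n}$.

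\textbf{Step 3: the inclusion $\ker\widetilde{A_n}\subseteq P_1(\ker A_{G_n}^{\Theta_n})$.} Take $f\in\ker\widetilde{A_n}$ and define $F=(f,\langle f,q_1\rangle,\dots,\langle f,q_n\rangle)^T$. This lies in $\cl K_{\Theta_n}=\cl K_\theta\oplus\mathbb C^n$. By Step 1 it satisfies $A_{G_n}^{\Theta_n}F=0$, and clearly $P_1F=f$.

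The only point needing care is the identification $P_z(\overline{q_j}f)=\langle f,q_j\rangle$, including the convention that $\overline{q_j}$ denotes the boundary conjugate and that the inner product is the $L^2(\mathbb T)$ one. Once that is checked, the rest is bookkeeping. As a by-product, the argument shows that $P_1$ restricted to $\ker A_{G_n}^{\Theta_n}$ is injective, since the $c_j$ are determined by $f$.
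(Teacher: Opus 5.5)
Your proposal is correct and follows the paper's own proof: compute $A_{G_n}^{\Theta_n}F$ coordinatewise using $P_z(\overline{q_j}f)=\langle f,q_j\rangle$ and $p_j\in\cl K_\theta$, then prove the two inclusions via the lift $c_j=\langle f,q_j\rangle$. Your remark that this lift is unique, so $P_1$ is injective on $\ker A_{G_n}^{\Theta_n}$, is a small point the paper leaves implicit.
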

\begin{proof}
Suppose $F \in  \cl K_{\Theta_n}$, then there exist $f \in \cl K_\theta$ and $c_1, \dots , c_n \in \mathbb{C}$ such that $F=
\begin{pmatrix}
f\\ c_1\\ \vdots\\ c_n
\end{pmatrix}$. Then by the definition of a matrix-valued truncated Toeplitz operator,
\[
A_{G_n}^{\Theta_n}F
=
P_{\Theta_n}(G_nF).
\]
Since the first component belongs to $L^2(\mathbb T)$ and the remaining
components are projected onto $\cl K_z=H^2(\bb D) \ominus zH^2(\bb D)=\mathbb C$, we obtain
\[
A_{G_n}^{\Theta_n}F
=
\begin{pmatrix}
P_\theta\bigg(
gf-\displaystyle\sum_{j=1}^n c_jp_j
\bigg)
\\[2mm]
P_z\left(\overline{q_1}f-c_1\right)
\\
\vdots
\\
P_z\left(\overline{q_n}f-c_n\right)
\end{pmatrix}.
\]
Now, we have $ P_z(\overline{q_j}f)=\langle f,q_j\rangle$, therefore
\begin{align*}
A_{G_n}^{\Theta_n}F
=
\begin{pmatrix}
A_g^\theta f-\displaystyle\sum_{j=1}^n c_jp_j\\[2mm]
\langle f,q_1\rangle-c_1\\
\vdots\\
\langle f,q_n\rangle-c_n
\end{pmatrix}.    
\end{align*}
Now, first suppose that $f\in\ker \widetilde{A_n}$. Then
\begin{align*}
 A_g^\theta f-
\sum_{j=1}^n\langle f,q_j\rangle p_j=0.   
\end{align*}
Choosing $c_j=\langle f,q_j\rangle$, we obtain
\begin{align*}
   A_{G_n}^{\Theta_n}
\begin{pmatrix}
f\\
\langle f,q_1\rangle\\
\vdots\\
\langle f,q_n\rangle
\end{pmatrix}
=0. 
\end{align*}
Therefore 
\begin{align*}
  \begin{pmatrix}
f\\
\langle f,q_1\rangle\\
\vdots\\
\langle f,q_n\rangle
\end{pmatrix}
\in\ker A_{G_n}^{\Theta_n}.  
\end{align*}

Conversely, suppose that
\[
\begin{pmatrix}
f\\
c_1\\
\vdots\\
c_n
\end{pmatrix}
\in\ker A_{G_n}^{\Theta_n},
\]
where $f \in \cl K_\theta$ and $c_1, \dots , c_n \in \mathbb{C}$.
Then the first component of
$A_{G_n}^{\Theta_n}F=0$ gives 
\begin{align*}
  A_g^\theta f
=
\sum_{j=1}^n
\langle f,q_j\rangle p_j,  
\end{align*}
which is precisely $ \widetilde{A_n}f=0$, and hence, $f \in \ker  \widetilde{A_n}$. This shows that $\ker  \widetilde{A_n}
=
P_1\bigl(\ker A_{G_n}^{\Theta_n}\bigr)$.

\end{proof}

We next recall a result describing the behavior of nearly
\(S^*\)-invariant subspaces under coordinate projections. 

\begin{theorem}\cite[Theorem 3.1]{ROL1} \label{PVVD}
Suppose $\cl M \subset H^2(\bb D, \bb C^m)$ is a nearly $S^*$-invariant subspace with a defect space $\cl D$.  For $1\leq i\leq m$, let $P_i:H^2(\bb D,\bb C^m)\longrightarrow H^2(\bb D,\bb C^i)$ denotes the projection onto the first $i$ coordinates. Then $P_i(\cl M)$ is a nearly invariant linear subspace (not necessarily closed) with a defect space 
 \begin{center}
    $\displaystyle
    \Bigg(
    \frac{\operatorname{span}\{P_i(W_1),\dots,P_i(W_r)\}}{z}
    \cap H^2(\bb D,\bb C^i)
    \Bigg)+P_i(\cl D)$
\end{center}
where $\{W_i\}_{i=1}^{r}$ is an orthonormal basis of  $\mathcal{M}\ominus (\mathcal{M}\cap z H^2(\mathbb{D},\mathbb{C}^m))$.

\end{theorem}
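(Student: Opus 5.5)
The argument mirrors the proof of Theorem \ref{R32}, with $\cl D$ carried along as an extra summand. Write $\cl W:=\cl M\ominus(\cl M\cap zH^2(\bb D,\bb C^m))$ and let $W_1,\dots,W_r$ be an orthonormal basis of $\cl W$. The evaluation map $F\mapsto F(0)$ is injective on $\cl W$: if $F\in\cl W$ and $F(0)=0$, then $F\in\cl M\cap zH^2(\bb D,\bb C^m)$, which is orthogonal to $\cl W$, so $F=0$. Hence $r\le m$ and $\cl W$ is finite-dimensional. We then have the orthogonal decomposition $\cl M=\cl W\oplus(\cl M\cap zH^2(\bb D,\bb C^m))$.

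Now take $f\in P_i(\cl M)$ with $f(0)=0$, and choose $F\in\cl M$ with $P_iF=f$. Write $F=\sum_{j=1}^r a_jW_j+F_0$ with $F_0\in\cl M\cap zH^2(\bb D,\bb C^m)$. Since $P_i$ commutes with evaluation at $0$ and $F_0(0)=0$, we get
\[
P_i\Bigl(\sum_{j} a_jW_j\Bigr)(0)=0,
\]
so $G:=P_i\bigl(\sum_j a_jW_j\bigr)\in zH^2(\bb D,\bb C^i)$. Therefore $G/z$ lies in $\frac{\operatorname{span}\{P_iW_1,\dots,P_iW_r\}}{z}\cap H^2(\bb D,\bb C^i)$. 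For the other piece, near invariance with defect gives $S^*F_0\in\cl M\oplus\cl D$, say $S^*F_0=M+D$. Because $P_i$ commutes with $S^*$ (it acts coordinatewise), we have $P_i(S^*F_0)=P_iM+P_iD\in P_i(\cl M)+P_i(\cl D)$.

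Combining the two pieces,
\[
S^*f=\frac{G}{z}+P_i(S^*F_0)\in P_i(\cl M)+\Bigl(\frac{\operatorname{span}\{P_iW_j\}}{z}\cap H^2(\bb D,\bb C^i)\Bigr)+P_i(\cl D).
\]
This is exactly the claimed inclusion. The only points needing care are, first, the bookkeeping that $P_i$ commutes with both $S^*$ and evaluation at $0$, and second, that since $P_i(\cl M)$ need not be closed, the conclusion is a statement about the linear subspace $P_i(\cl M)$ and its sum with the finite-dimensional space. Orthogonality to $P_i(\cl M)$ is not required here.
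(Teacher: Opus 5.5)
Your argument is correct and is essentially the paper's: the statement is quoted from \cite{ROL1} without proof, but the proof of Theorem~\ref{R32} runs your decomposition $\cl M=\cl W\oplus(\cl M\cap zH^2(\bb D,\bb C^m))$, splitting $S^*f$ into $P_i(\sum_j a_jW_j)/z$ plus $P_i(S^*F_0)$, in the special case $\cl D=\{0\}$ of a Toeplitz kernel, and carrying the extra $P_i(\cl D)$ term is the only change needed. Your remark that the defect space need not be orthogonal to $P_i(\cl M)$ also matches the paper, which notes there that such a space may be replaced by its projection onto the orthogonal complement without increasing its dimension.
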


The preceding theorem gives an explicit defect space for the coordinate
projection \(P_i(\mathcal M)\). For our purposes, it is useful to have a
simpler upper bound for its dimension, which we establish next.

\begin{theorem} \label{R42}
Suppose $\cl M\subset H^2(\bb D,\bb C^m)$ is a nearly $S^*$-invariant
subspace with defect $d$. Then
$P_i(\cl M)$ is nearly $S^*$-invariant with defect at most
$d+m-i$.
\end{theorem}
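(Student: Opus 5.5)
The plan is to invoke Theorem \ref{PVVD} directly and then bound the dimension of the defect space it produces, in the same way as the rank argument in the proof of Theorem \ref{R32}. Let $\cl D$ be a defect space of $\cl M$ with $\dim \cl D = d$. Let $W_1,\dots,W_r$ be an orthonormal basis of $\cl W:=\cl M\ominus(\cl M\cap zH^2(\bb D,\bb C^m))$. Theorem \ref{PVVD} says that $P_i(\cl M)$ is nearly $S^*$-invariant with defect space
\[
\cl E:=\left(\frac{\operatorname{span}\{P_iW_1,\dots,P_iW_r\}}{z}\cap H^2(\bb D,\bb C^i)\right)+P_i(\cl D).
\]
Since $\dim P_i(\cl D)\le \dim\cl D = d$, it is enough to show that the first summand has dimension at most $m-i$. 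If $\cl E$ is not orthogonal to $P_i(\cl M)$, we replace it by its orthogonal projection onto the complement of $P_i(\cl M)$ (or of its closure). This does not increase the dimension, exactly as remarked in the proof of Theorem \ref{R32}.

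For the first summand, we use the injectivity of evaluation at $0$ on $\cl W$. If $W\in\cl W$ and $W(0)=0$, then $W\in \cl M\cap zH^2$, so $W=0$. Hence $W_1(0),\dots,W_r(0)$ are linearly independent in $\bb C^m$. A combination $\sum a_jP_iW_j$ lies in $zH^2(\bb D,\bb C^i)$ exactly when $P_iV a=0$, where $V=[W_1(0)\,\cdots\,W_r(0)]$ is an $m\times r$ matrix of rank $r$. Moreover, the map $a\mapsto z^{-1}\sum a_jP_iW_j$ sends $\ker(P_iV)$ onto the first summand, so that summand has dimension at most $\dim\ker(P_iV)=r-\operatorname{rank}(P_iV)$.

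To bound this, split $V$ into its first $i$ rows $V_1=P_iV$ and its last $m-i$ rows $V_2$. Then $r=\operatorname{rank}V\le \operatorname{rank}V_1+\operatorname{rank}V_2\le \operatorname{rank}(P_iV)+(m-i)$, so the first summand has dimension at most $m-i$. Combining the two estimates gives $\dim\cl E\le d+m-i$, which is the claimed defect bound.

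The main obstacle is the bookkeeping around the definition of the defect. Theorem \ref{PVVD} only gives a linear subspace that need not be closed, while the definition asks for a defect space orthogonal to the subspace. I will handle this as the paper does: note that the smallest admissible defect space has dimension at most $\dim\cl E$, and read ``nearly $S^*$-invariant'' for $P_i(\cl M)$ in the same sense as Theorem \ref{PVVD}.
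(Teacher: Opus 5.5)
Your proof is correct, but it takes a different route from the paper's proof of this theorem.

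\textbf{The paper's argument.} The paper does not call on Theorem \ref{PVVD} here; it works directly from the definition of near invariance.
\begin{enumerate}
\item It sets $\cl V:=\{F(0):F\in\cl M\}\cap(\{0\}^i\times\bb C^{m-i})$, so that $\dim\cl V\le m-i$ is immediate.
\item It lifts a basis $v_1,\dots,v_r$ of $\cl V$ to functions $f_j\in\cl M$ with $f_j(0)=v_j$.
\item Given $g=P_if$ with $g(0)=0$, it writes $f(0)=\sum_j c_jv_j$. It then applies the near invariance of $\cl M$ to $h=f-\sum_j c_jf_j$, which vanishes at $0$.
\item Since $P_i$ commutes with $S^*$, this yields the defect space $P_i(\cl D)+\operatorname{span}\{P_iS^*f_1,\dots,P_iS^*f_r\}$, of dimension at most $d+r\le d+m-i$.
\end{enumerate}

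\textbf{Your argument.} You instead import the defect space from Theorem \ref{PVVD}. You then bound its first summand using the rank argument from the proof of Theorem \ref{R32}.

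\textbf{How the two relate.} The two counts are the same in disguise. Your matrix $V$ is injective with range $\cl W(0)=\cl M(0)$, so $V$ maps $\ker(P_iV)$ isomorphically onto $\cl V$. Moreover, your first summand is exactly $\{P_iS^*W:\ W\in\cl W,\ W(0)\in\{0\}^i\times\bb C^{m-i}\}$. This is the paper's span for the particular choice of lifts $f_j\in\cl W$.

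\textbf{What each buys.} The paper's argument is self-contained: it needs neither the cited theorem nor an orthonormal basis of $\cl W$. Yours bounds the dimension of the specific space produced by Theorem \ref{PVVD}. That is the form in which the estimate is invoked in the proof of Theorem \ref{R43}, where $\dim\cl F_n\le 2n+1$ is asserted for that space.

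\textbf{On your closedness caveat.} You note that $P_i(\cl M)$ may fail to be closed, which makes orthogonalizing the defect space delicate. This applies equally to the paper's proof. It is harmless in the applications, where $P_i(\cl M)$ is a kernel and hence closed.
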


\begin{proof}
Suppose $\cl D$ is the defect space for $\cl M$. Consider the point evaluation map $E:\cl M\longrightarrow\bb C^m$ defined by $E(f)=f(0)$.
Define $\cl V:= E(\cl M)\cap
\bigl(\{0\}^i\times\bb C^{m-i}\bigr)$.
Since $\cl V\subseteq
\{0\}^i\times\bb C^{m-i}$, therefore $r:=\dim\cl V\leq m-i$. Let $\{v_1,\ldots,v_r\}$ be an orthonormal basis for $\cl V$. Since $v_j\in E(\cl M)$ for each $j$, there exists
$f_j\in\cl M$ such that $f_j(0)=v_j$, for $j=1,\ldots,r$.

Now, let $g\in P_i(\cl M)$ with $g(0)=0$. Then there exists $f\in\cl M$ such that $P_i f=g$. Since $g(0)=0$, we have $P_i f(0)=0$. Consequently, $f(0) \in \cl V$. Hence there exist scalars $c_1,\ldots,c_r$ such that $$f(0)=\sum_{j=1}^r c_jv_j.$$
Define $\displaystyle{h:=f-\sum_{j=1}^r c_jf_j}$. Since $f,f_1,\ldots,f_r\in\cl M$, we have $h\in\cl M$. Moreover,
\begin{align*}
h(0)
=f(0)-\sum_{j=1}^r c_jf_j(0)
=f(0)-\sum_{j=1}^r c_jv_j
=0.    
\end{align*}
It follows that there exist $p\in\cl M$ and $q\in\cl D$ such that $S^*h=p+q$.
Since $\displaystyle{h=f-\sum_{j=1}^r c_jf_j}$, therefore 
\begin{align*}
 S^*f=p + q+\sum_{j=1}^r c_jS^*f_j   
\end{align*}
Since $P_i$ commutes with $S^*$, we obtain 
\begin{align*}
 S^*g
=S^*P_i f
=P_iS^*f
=P_i p+P_i q
+\sum_{j=1}^r c_jP_iS^*f_j.   
\end{align*}
Therefore, 
\begin{align*}
 S^*g\in P_i(\cl M)+\cl E_i,   
\end{align*}
where
\[
\cl E_i
:=
P_i(\cl D)
+
\operatorname{span}
\left\{
P_iS^*f_1,\ldots,P_iS^*f_r
\right\}.
\]
This is a finite-dimensional defect subspace in the sense of the
implication defining near invariance; replacing it by its orthogonal
projection onto the orthogonal complement of $P_i(\cl M)$ does not
increase its dimension. Thus
\[
\begin{aligned}
\dim\cl E_i
&\leq
\dim P_i(\cl D)+r\\
&\leq d+r\\
&\leq d+m-i,
\end{aligned}
\]
which shows that $P_i(\cl M)$ is
nearly $S^*$-invariant with defect at most $d+m-i$. This completes the proof.
\end{proof}

We can now combine the preceding realization with the coordinate-projection
estimate to obtain the main result for scalar truncated Toeplitz operators.
It shows that the nearly \(S^*\)-invariant structure of the kernel persists
under finite-rank perturbations, with an explicit defect bound depending
only on the rank of the perturbation.

\begin{theorem} \label{R43}
 Suppose $\theta$ is an inner function, $g\in L^\infty(\mathbb T)$, and $p_j,q_j\in \cl K_\theta$ for $j=1, \dots , n$. Let $ \widetilde{A_n}$ be the perturbed truncated Toeplitz operator defined by $\displaystyle{\widetilde{A_n}:=A_g^\theta-\sum_{j=1}^n p_j\otimes q_j}$. Then $\ker  \widetilde{A_n}$ is nearly $S^*$-invariant with finite defect of at most $2n+1$.
\end{theorem}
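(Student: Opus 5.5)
The plan is to chain the three preceding results. First, by Theorem \ref{R41}, $\ker \widetilde{A_n}=P_1\bigl(\ker A_{G_n}^{\Theta_n}\bigr)$, where $\Theta_n=\operatorname{diag}(\theta,z,\ldots,z)$ and $G_n$ is the $(n+1)\times(n+1)$ symbol built from $g$, the $p_j$ and the $\overline{q_j}$. Next, Theorem \ref{R32} applies to this particular $\Theta_n$ and arbitrary $G_n\in L^\infty(\mathbb T)^{(n+1)\times(n+1)}$. It gives that $\cl M:=\ker A_{G_n}^{\Theta_n}\subset H^2(\mathbb D,\mathbb C^{n+1})$ is nearly $S^*$-invariant with defect $d\leq n+1$.

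Then I apply Theorem \ref{R42} with $m=n+1$ and $i=1$. It yields that $P_1(\cl M)$ is nearly $S^*$-invariant with defect at most $d+m-i\leq (n+1)+(n+1)-1=2n+1$. Since $P_1(\cl M)=\ker\widetilde{A_n}$, this is exactly the claimed bound.

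A few points need checking along the way. The entries $-p_j$ and $\overline{q_j}$ of $G_n$ must be bounded for the TTO framework to apply. They lie in $\cl K_\theta$, which is only $L^2$. I would argue that the computation in Theorem \ref{R41}, and the realization in Lemma \ref{R31}, only use that $G_nF\in L^2$ for $F\in\cl K_{\Theta_n}$. This holds because the last $n$ coordinates of $F$ are constants, and $\overline{q_j}f\in L^1$ is paired only through $P_z$, i.e. $\langle f,q_j\rangle$. If needed, one can instead work directly with the Toeplitz-kernel realization on the dense domain, or note that the Toeplitz kernel is still nearly $S^*$-invariant because $P_+(\mathcal G F)=0$ is preserved under $F\mapsto F/z$ when $F(0)=0$.

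The other point is that $\ker\widetilde{A_n}$ is closed, being the kernel of a bounded operator. So although Theorem \ref{R42}, like Theorem \ref{PVVD}, a priori concerns a possibly non-closed projection, here the image is a genuine subspace. The defect space can then be taken orthogonal to it by projecting, without increasing its dimension.

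I expect the main obstacle to be this boundedness/domain issue for the symbol $G_n$, rather than the counting, which is immediate once the earlier theorems apply.
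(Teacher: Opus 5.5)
Your chain Theorem~\ref{R41} $\to$ Theorem~\ref{R32} $\to$ Theorem~\ref{R42} (with $m=n+1$, $i=1$, $d\le n+1$) is exactly the paper's proof, including the count $(n+1)+(n+1)-1=2n+1$. You are also right that $G_n\notin L^\infty(\mathbb T)^{(n+1)\times(n+1)}$ in general. The paper applies Theorem~\ref{R32}, which is proved via Lemma~\ref{R31} for bounded symbols, without addressing this.

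\textbf{Your patch does not close this gap.} For $F=(f,c_1,\dots,c_n)\in\mathcal K_{\Theta_n}$, the lower entries of $G_nF$ are $\overline{q_j}f-c_j$, which lie only in $L^1$. So ``$G_nF\in L^2$'' is false, as your own next sentence concedes. Reading $P_z$ as the zeroth Fourier coefficient does justify the formula in Theorem~\ref{R41}, but Lemma~\ref{R31} needs more. To lift $(f,c_1,\dots,c_n)\in\ker A^{\Theta_n}_{G_n}$ into $\ker T_{\mathcal G_n}\subset H^2(\mathbb D,\mathbb C^{2n+2})$, you need $h_{j+1}\in H^2$ with $zh_{j+1}=c_j-P_+(\overline{q_j}f)$. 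That is, you need $P_+(\overline{q_j}f)\in H^2$, which involves all nonnegative Fourier coefficients of $\overline{q_j}f$, not only the zeroth.

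\textbf{The lift genuinely fails.} Take $n=1$, $g=0$, $p_1=0$ and $q_1=f\in\mathcal K_\theta\setminus L^4$. Such $f$ exist whenever $\theta$ is not a finite Blaschke product; test a putative bound $\|f\|_4\le C\|f\|_2$ on reproducing kernels $k_\lambda$ with $|\theta(\lambda)|\le 1/2$ and $|\lambda|\to1$. Then $f\in\ker\widetilde{A_1}=\mathcal K_\theta$, and any lift must have $c_1=\|f\|_2^2$ and $zh_2=\|f\|_2^2-P_+(|f|^2)$. Since $|f|^2=2\operatorname{Re}P_+(|f|^2)-\|f\|_2^2$ is not in $L^2$, we get $P_+(|f|^2)\notin H^2$, so $f\notin P_1(\ker T_{\mathcal G_1})$. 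Thus the identity $\ker A^{\Theta_n}_{G_n}=P_{n+1}(\ker T_{\mathcal G_n})$ degrades to an inclusion $\supseteq$. The near invariance of the Toeplitz kernel, which as you note does persist, then no longer transfers, because the proof of Theorem~\ref{R32} must lift every element of $\ker A^{\Theta_n}_{G_n}$.

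\textbf{Repairs.} One repair keeps the paper's route: keep $h_1\in H^2$, but let the coordinates of $h$ attached to the $z$-blocks range over formal power series. The argument of Theorem~\ref{R42} is purely algebraic (evaluation at $0$, division by $z$, $P_iS^*=S^*P_i$), so it still gives the bound $2n+1$. A cleaner repair bypasses the matrix model entirely. For $u\in L^2$ one has $P_\theta(\bar zu)=S^*P_\theta u+\langle u,\theta\rangle S^*\theta$. Combining this with $S^*f=\bar zf$ and $A_g^\theta f=\sum_j\langle f,q_j\rangle p_j$, every $f\in\ker\widetilde{A_n}$ with $f(0)=0$ satisfies
\[
\widetilde{A_n}S^*f=\sum_{j=1}^n\langle f,q_j\rangle S^*p_j-\sum_{j=1}^n\langle S^*f,q_j\rangle p_j+\langle gf,\theta\rangle S^*\theta .
\]
The right-hand side lies in $R:=\operatorname{span}\{S^*\theta,p_1,\dots,p_n,S^*p_1,\dots,S^*p_n\}$. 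Hence $S^*f\in\widetilde{A_n}^{-1}(R)=\ker\widetilde{A_n}\oplus\mathcal D$, where $\mathcal D:=\widetilde{A_n}^{-1}(R)\ominus\ker\widetilde{A_n}$ satisfies $\dim\mathcal D=\dim(R\cap\operatorname{ran}\widetilde{A_n})\le 2n+1$. This proves the theorem with no boundedness assumption on the $p_j,q_j$.
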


\begin{proof}

Suppose $A_{G_n}^{\Theta_n}$ is the matrix-valued truncated Toeplitz operator associated with $\Theta_n$ and $G_n$. Then by Theorem \ref{R32}, $\ker A_{G_n}^{\Theta_n}$
is nearly $S^*$-invariant with finite defect at most $n+1$. Thus, we can apply Theorem \ref{PVVD}
with $i=1$ to show that $\ker \widetilde{A_n}=P_1(\ker A_{G_n}^{\Theta_n})$ is nearly $S^*$-invariant with finite defect. 

 Define $\displaystyle{\mathcal W_n
:= \ker A_{G_n}^{\Theta_n}\ominus
\bigl(\ker A_{G_n}^{\Theta_n}\cap zH^2(\mathbb D,\mathbb C^{n+1})\bigr)}$ and let $W_1,\ldots,W_r$ be an orthonormal basis for $\mathcal W_n$. Let $\cl D_n$ be the defect space. Then by
Theorem~\ref{PVVD}, a defect space $\cl F_n$ for $\ker \widetilde{A_n}$ is given by
\begin{align*}
 \mathcal F_n
=
\left(
\frac{
\operatorname{span}\{P_1W_1,\ldots,P_1W_r\}
}{z}
\cap H^2(\bb D)
\right)
+
P_1(\mathcal D_n). 
\end{align*}
Further, by Theorem \ref{R42}
$$\operatorname{dim}\cl F_n \le (n+1) +(n+1)-1=2n+1.$$

This completes the proof.
\end{proof}

The preceding theorem immediately yields the corresponding result for the
unperturbed truncated Toeplitz operator by taking the perturbation rank to
be zero. In this case, the defect bound becomes one. Moreover, the defect
can be described more precisely according to whether the kernel contains
a function that does not vanish at the origin. This gives the following
corollary.
\begin{corollary}
Let $\theta$ be an inner function and $g\in L^\infty(\mathbb T)$. Then
$\ker A_g^\theta$ is nearly $S^*$-invariant with defect at most $1$.
More precisely, the following assertions hold.

\begin{enumerate}
\item[(i)] If $\ker A_g^\theta$ contains a function which does not vanish at
$0$, then $\ker A_g^\theta$ is nearly $S^*$-invariant.

\item[(ii)] If every function in $\ker A_g^\theta$ vanishes at $0$, then
$\ker A_g^\theta$ is nearly $S^*$-invariant with defect at most $1$.
\end{enumerate}
\end{corollary}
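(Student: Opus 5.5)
The plan is to bypass the perturbation machinery and argue directly with the scalar realization, since the claims are sharper than the specialization $n=0$ of Theorem \ref{R43} (which would give defect $2\cdot 0+1=1$, but only by a roundabout route). First, the bound "defect at most $1$" follows by taking $n=0$ in Theorem \ref{R32}: then $\Theta_0=\theta$, $G_0=g$, and $A_{G_0}^{\Theta_0}=A_g^\theta$, so $\ker A_g^\theta$ is nearly $S^*$-invariant with defect at most $n+1=1$. Alternatively, I would apply Lemma \ref{R31} with $\mathcal G=\begin{pmatrix}\bar\theta&0\\ g&\theta\end{pmatrix}$ and then Theorem \ref{R42} with $m=2$, $i=1$, $d=0$, using that Toeplitz kernels are nearly $S^*$-invariant. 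Either route gives the global bound, and hence (ii).

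For (i), I would refine the proof of Theorem \ref{R42} in this scalar setting. Write $\cl M=\ker T_{\mathcal G}\subset H^2(\bb D,\bb C^2)$, so that $\ker A_g^\theta=P_1(\cl M)$. In that proof the defect comes only from $\cl V=E(\cl M)\cap(\{0\}\times\bb C)$, with contribution $P_1S^*f_j$ for $f_j\in\cl M$ satisfying $f_j(0)=v_j\in\cl V$. I would instead show directly that if $f\in\ker A_g^\theta$ and $f(0)=0$, then $S^*f\in\ker A_g^\theta$ whenever the kernel contains some $k$ with $k(0)\neq 0$.

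The key computation uses the standard identity $A_g^\theta S_\theta^* - S_\theta^*A_g^\theta$ being of rank at most two, or more concretely the following. If $f(0)=0$, then $f=zS^*f$ with $S^*f\in\cl K_\theta$, and
\[
0=P_\theta(gzS^*f)=P_\theta\bigl(z\,gS^*f\bigr).
\]
For $u\in L^2$, one has $P_\theta(zu)=S_\theta P_\theta u+c(u)\,P_\theta\theta\bar z$-type correction terms, where the correction is a multiple of $S^*\theta$. This yields $A_g^\theta S^*f\in\operatorname{span}\{k_0^\theta\}$, where $k_0^\theta=1-\overline{\theta(0)}\theta$. So $A_g^\theta(S^*f)=\lambda k_0^\theta$.

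To remove the scalar $\lambda$, I would use $k$ with $k(0)\neq0$. One has $A_g^\theta(S^*f-\mu S^*(k-k(0)))$ or a similar combination, and the aim is to exhibit $S^*f$ as an element of the kernel plus a one-dimensional correction that can be absorbed. This is the main obstacle. I would first try the Hitt–Sarason style argument: for the nearly invariant space $\cl M$ of the Toeplitz kernel, take $F=(f,h)$ with $f(0)=0$. If $h(0)\neq 0$, subtract $\frac{h(0)}{K_2(0)}K$, where $K=(k,k_2)\in\cl M$. This only works if some element of $\cl M$ has second coordinate nonzero at $0$ and first coordinate zero at $0$, so it must be checked against the actual values in $E(\cl M)$.

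I expect that case (i) means $E(\cl M)\not\subset\{0\}\times\bb C$. Then $\dim \cl V\le \dim E(\cl M)-1\le 1$, and the remaining reduction to $\dim\cl V=0$ needs $E(\cl M)$ to be one-dimensional. I would verify this via the relation $h=-\bar\theta P_+(gf)$, which expresses $h(0)$ as a linear functional of $f$ that vanishes whenever $f(0)=0$ (checking the $k_0^\theta$ computation above). If this holds, then $\cl V=\{0\}$ and the defect is $0$.
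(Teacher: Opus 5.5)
Your handling of the global bound, and hence of (ii), is fine: Theorem \ref{R32} with $n=0$, or Lemma \ref{R31} combined with Theorem \ref{R42} for $m=2$, $i=1$, $d=0$, gives defect at most $1$. This is the same machinery the paper uses through $n=0$ in Theorem \ref{R43}. Your reduction of (i) is also correct. With $\mathcal M=\ker T_{\mathcal G}$, defect $0$ follows once $E(\mathcal M)=\{F(0):F\in\mathcal M\}$ is one-dimensional. This is exactly the fact $\dim\ker T_G(0)=1$ that the paper imports from C\^amara--Partington (the kernel is of scalar type because $\det\mathcal G=1$) and from O'Loughlin.

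The gap is that you never establish this fact, and the justification you propose is false. It is not true that, for $(f,h)\in\mathcal M$, $h(0)$ vanishes whenever $f(0)=0$. Take $\theta=z^N$ with $N\ge 2$ and $g=z$. Then $f=z^{N-1}\in\ker A_z^\theta$ has $f(0)=0$, but $h=-\bar\theta P_+(zf)=-1$.

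Such a claim could not hold in any case. It makes no use of the hypothesis of (i), and it would force every TTO kernel to have defect $0$. In this example, however, $S^*f=z^{N-2}\notin\ker A_z^\theta$, so the defect is $1$.

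Your intermediate computation is also off. For $f\in\ker A_g^\theta$ with $f(0)=0$ one gets $A_g^\theta S^*f=-h(0)\,S^*\theta$, so the correction lies along $S^*\theta$, not along $k_0^\theta$. In addition, the Hitt--Sarason subtraction you describe only yields defect at most $1$, not $0$.

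What is missing is an argument in which the hypothesis $k(0)\neq 0$ actually enters. One way is to use the conjugation $Cu=\theta\bar z\bar u$ on $\mathcal K_\theta$:
\begin{enumerate}
\item[(a)] Since $CA_g^\theta C=A_{\bar g}^\theta$, you have $Ck\in\ker A_{\bar g}^\theta\perp\operatorname{Ran}A_g^\theta$.
\item[(b)] Since $S^*\theta=Ck_0^\theta$, you get $0=\langle A_g^\theta S^*f,Ck\rangle=-h(0)\langle k,k_0^\theta\rangle=-h(0)k(0)$.
\end{enumerate}
Hence $h(0)=0$ whenever the kernel contains some $k$ with $k(0)\neq0$, which gives $\dim E(\mathcal M)=1$ and closes (i). Either this argument or the cited scalar-type results is needed.
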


\begin{proof}
Taking $n=0$ in Theorem \ref{R43}, we have $\widetilde A_0=A_g^\theta$, and hence $\ker\widetilde A_0=\ker A_g^\theta$.
Moreover, Theorem \ref{R43} gives
\[
\dim\mathcal F_0\leq 2(0)+1=1,
\]
where $\mathcal F_0$ is a defect space for $\ker A_g^\theta$.

Suppose $\displaystyle{G:=\begin{pmatrix}
\overline{\theta}&0\\
g&\theta
\end{pmatrix}}$. Then, since \(\theta\) is inner, we have
\[
\det G=\overline{\theta}\theta=1
\quad \text{a.e. on } \mathbb T.
\]
Consequently, by Corollary 3.9 in \cite{CamPar}, the Toeplitz kernel \(\mathcal \ker T_G\) is of scalar type. Since \(\mathcal \ker T_G\neq\{0\}\), it follows from Corollary 3.11 in \cite{ROL3} that $$\dim\mathcal \ker T_G(0)=1,$$ where $\mathcal \ker T_G (0)=\{F(0):F\in\mathcal \ker T_G\}$. Moreover, by Lemma 3.9 in \cite{ROL3},
\[
\dim\mathcal W=\dim\mathcal \ker T_G(0)=1,
\]
where
\[
\mathcal W=\mathcal \ker T_G\ominus
\bigl(\mathcal \ker T_G\cap zH^2(\mathbb D,\mathbb C^2)\bigr).
\]
Thus, \(\mathcal W=\operatorname{span}\{W_1\}\) for some nonzero vector \(W_1=\begin{pmatrix} w_1  \\ w_2\end{pmatrix}\in \mathcal \ker T_G  \).

Since \(\ker A_g^\theta=P_1(\mathcal \ker T_G)\) and there exists \(f\in\ker A_g^\theta\) with \(f(0)\neq0\), we must have \(w_1(0)\neq0\). Indeed, if \(w_1(0)=0\), then every element of \(\mathcal \ker T_G\) has first coordinate vanishing at the origin, contradicting the hypothesis. Therefore,
\[
\cl D=\left(
\frac{\operatorname{span}\{w_1\}}{z}
\right)\cap H^2(\bb D)=\{0\}.
\]
 Furthermore, since \(\mathcal \ker T_G\) is a Toeplitz kernel, it is nearly \(S^*\)-invariant. Thus, by Theorem \ref{PVVD}, \(\ker A_g^\theta\) is a nearly $S^*$-invariant subspace with defect space $\cl F_0$, and $\cl F_0$ is given by
\[
\begin{aligned}
\mathcal F_0
&=
\left(
\frac{\operatorname{span}\{P_1W_1\}}{z}
\cap H^2(\bb D)
\right)+P_1(\cl D)\\
&=
\left(
\frac{\operatorname{span}\{w_1\}}{z}
\cap H^2(\bb D)
\right)
=\{0\}.
\end{aligned}
\]
Hence, \(\ker A_g^\theta\) is nearly \(S^*\)-invariant.

Further, suppose that every function in $\ker A_g^\theta$ vanishes at $0$.
Then $\ker A_g^\theta\subset zH^2(\mathbb D)$, and consequently $\ker A_g^\theta\cap zH^2(\mathbb D)=\ker A_g^\theta$. It follows that
\[
\ker A_g^\theta\ominus
\bigl(\ker A_g^\theta\cap zH^2(\mathbb D)\bigr)
=\{0\}.
\]
Thus there is no contribution from the first term in the defect space
described in Theorem \ref{PVVD}. The remaining defect space is therefore
$P_1(\mathcal D_0)$, and by Theorem \ref{R43},
\[
\dim \cl F_0=\dim P_1(\mathcal D_0)\leq1.
\]
Hence, $\ker A_g^\theta$ is nearly $S^*$-invariant with defect at most $1$.
\end{proof}

For completeness, we record the cases $n=1$ and $n=2$ separately. In
these cases, the associated matrix-valued realizations and defect bounds
take particularly transparent forms.

\subsubsection{The case $n=1$}

For $n=1$, the above construction takes the following form:
\begin{align*}
 \widetilde{A_1}=A_g^\theta-p\otimes q, \text{ where } p,q\in \cl K_\theta.
\end{align*}
Thus by Theorem \ref{R43}, $\ker \widetilde{A_1}$ is nearly $S^*$-invariant with defect at most 3.
Also, in this case
\[
\Theta_1=
\begin{pmatrix}
\theta&0\\
0&z
\end{pmatrix}, 
G_1=
\begin{pmatrix}
g&-p\\
\overline q&-1
\end{pmatrix}
\]
Therefore, for $\displaystyle{F=
\begin{pmatrix}
f\\c
\end{pmatrix}
\in \cl K_{\Theta_1}=\cl K_\theta\oplus\mathbb C}$, we have

\[
A_{G_1}^{\Theta_1}
(F)
=
\begin{pmatrix}
A_g^\theta f-cp\\
\langle f,q\rangle-c
\end{pmatrix}.
\]
Consequently,
\[
f\in\ker(\widetilde{A_1}) \iff \begin{pmatrix}
f\\
\langle f,q\rangle 
\end{pmatrix} \in
\ker A_{G_1}^{\Theta_1}.
\]
Thus
\begin{align*}
 \ker\widetilde{A_1}
=
P_1\left(\ker A_{G_1}^{\Theta_1}\right).   
\end{align*}

\subsubsection{The case $n=2$:}
For $n=2$, the above construction takes the following form:
\begin{align*}
 \widetilde{A}_2=A_g^\theta-p_1\otimes q_1-p_2\otimes q_2, \text{ where } p_1,p_2,q_1,q_2\in \cl K_\theta.  
\end{align*}
Thus by Theorem \ref{R43}, $\ker \widetilde{A_2}$ is nearly $S^*$-invariant with defect at most 5.
Also, in this case
\begin{align*}
\Theta_2=
\begin{pmatrix}
\theta&0&0\\
0&z&0\\
0&0&z
\end{pmatrix},  G_2=
\begin{pmatrix}
g&-p_1&-p_2\\
\overline{q_1}&-1&0\\
\overline{q_2}&0&-1
\end{pmatrix}.   
\end{align*}
Therefore, for $\displaystyle{F=
\begin{pmatrix}
f\\c_1\\c_2
\end{pmatrix}
\in \cl K_{\Theta_2}}=\cl K_\theta\oplus\mathbb C^2$, we have
\[
A_{G_2}^{\Theta_2}
(F)
=
\begin{pmatrix}
A_g^\theta f-c_1p_1-c_2p_2\\
\langle f,q_1\rangle-c_1\\
\langle f,q_2\rangle-c_2
\end{pmatrix}.
\]
Consequently,
\[
f\in
\ker \widetilde{A_2} \iff \begin{pmatrix}
f\\
\langle f,q_1\rangle\\
\langle f,q_2\rangle
\end{pmatrix}
\in
\ker A_{G_2}^{\Theta_2}.
\]
Thus
\begin{align*}
\ker \widetilde{A_2}
=
P_1\left(
\ker A_{G_2}^{\Theta_2}
\right).    
\end{align*}

\begin{remark}
  
Thus the scalar problem is reduced to the study of the kernel of a
matrix-valued truncated Toeplitz operator of size $(n+1)\times(n+1)$. The matrix-valued theory guarantees that
\[
\ker A_{G_n}^{\Theta_n}
\]
is nearly $S^*$-invariant with a defect of dimension at most $n+1$.
After taking the first-coordinate projection, one obtains a
finite-dimensional defect space for the scalar kernel. 

The following table summarizes the relationship between the rank of the perturbation, the size of the associated matrix-valued model, and the resulting general bound on the defect dimension:
\[
\begin{array}{c|c|c}
\text{rank of perturbation}
&
\text{matrix size}
&
\text{defect bound}
\\[2mm]
\hline
1 & 2\times2 & 3\\
2 & 3\times3 & 5\\
3 & 4\times4 & 7\\
\vdots & \vdots & \vdots\\
n & (n+1)\times(n+1) & 2n+1
\end{array}
\]
  
\end{remark}

\vspace{0.3cm}

To describe the structure of the kernels obtained above, we now use the following representation theorem for nearly \(S^*\)-invariant subspaces with finite
defect in vector-valued Hardy spaces. This was described by Loughlin in \cite{ROL1} and was also independently investigated by Chattopadhyay, Das and Pradhan in \cite{CDP1}.

\begin{theorem}\cite[Theorem 3.4]{ROL1} \label{VVD}
Suppose $\cl M \subset H^2(\mathbb{D},\mathbb{C}^m)$ is a subspace that is nearly $S^*$-invariant with a finite
defect $n$, and $\{e_j\}_{j=1}^{n}$ is an orthonormal basis of the defect space. Let $\cl W$ denotes the $r$ dimensional subspace $\mathcal{M}\ominus (\mathcal{M}\cap z H^2(\mathbb{D},\mathbb{C}^m))$. Then

\begin{enumerate}
\item In the case when there are functions in $\cl M$ that do not vanish at $0$,
    \[
    \cl M=
    \left\{
    F:\ F(z)=F_0(z)k_0(z)
    +z\sum_{j=1}^{n}k_j(z)e_j(z):
    (k_0,\ldots,k_n)\in \cl K
    \right\},
    \]
    where $F_0$ is the matrix with each column being an orthonormal element
    of  $\cl W$, 
    $k_0\in H^2(\mathbb{D},\mathbb{C}^r)$,
    $k_1,\ldots,k_m\in H^2(\bb D)$, and $ \cl K$ is an $S^*$-invariant subspace of $H^2(\mathbb{D},\mathbb{C}^{r+n})$. Furthermore, for all $F \in \cl M$, $$\displaystyle{\|F\|^2= \|k_0\|^2+\sum_{j=1}^{n}\|k_j\|^2}.$$
  
    \item In the case when all functions in $\cl M$ vanish at $0$,
    \[
    \cl M=
    \left\{
    F:\ F(z)=z\sum_{j=1}^{n}k_j(z)e_j(z):
    (k_1,\ldots,k_n)\in \cl K
    \right\},
    \]
    with the same notation as in (1), except that $\cl K$ is now a 
    $S^*$-invariant subspace of $ H^2(\mathbb{D},\mathbb{C}^n)$. Furthermore, for all $F \in \cl M$,
    \[
    \|F\|^2=\sum_{j=1}^{n}\|k_j\|^2.
    \]
\end{enumerate}

\end{theorem}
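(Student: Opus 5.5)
This is the structure theorem for nearly $S^*$-invariant subspaces with finite defect, quoted from O'Loughlin. I would prove it by adapting the Hitt--Sarason algorithm, the vector-valued version of Chalendar--Chevrot--Partington, to allow a defect space.

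\textbf{Step 1 (peeling off one factor).} Let $P_{\cl W}$ be the orthogonal projection onto $\cl W$, and let $F_0$ be the $m\times r$ matrix whose columns $W_1,\dots,W_r$ form an orthonormal basis of $\cl W$. Take $F\in\cl M$ and write
\[
F=P_{\cl W}F+(F-P_{\cl W}F),\qquad P_{\cl W}F=F_0a_0,\quad a_0\in\mathbb C^r .
\]
The second term lies in $\cl M\cap zH^2(\mathbb D,\mathbb C^m)$, so it vanishes at $0$. By the defect hypothesis,
\[
S^*(F-P_{\cl W}F)=F_1+\sum_{j=1}^n a_{1,j}e_j,\qquad F_1\in\cl M .
\]
Hence
\[
F=F_0a_0+zF_1+z\sum_j a_{1,j}e_j .
\]
Because $F_0a_0\perp zF_1$, the defect space is orthogonal to $\cl M$, and the $e_j$ are orthonormal, we get
\[
\|F\|^2=\|a_0\|^2+\|F_1\|^2+\sum_j|a_{1,j}|^2 .
\]
To justify this identity, note that $z(F_1+\sum a_{1,j}e_j)$ has norm equal to that of $F_1+\sum a_{1,j}e_j$, and $F_1\perp e_j$.

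\textbf{Step 2 (iterate and pass to the limit).} Apply Step 1 again to $F_1,F_2,\dots$. After $N$ steps this gives
\[
F=F_0\sum_{k=0}^{N-1}a_kz^k+z\sum_j\Big(\sum_{k=1}^{N}a_{k,j}z^{k-1}\Big)e_j+z^NF_N ,
\]
together with the norm identity
\[
\|F\|^2=\sum_{k<N}\|a_k\|^2+\sum_{k\le N}\sum_j|a_{k,j}|^2+\|F_N\|^2 .
\]
The remainder $z^NF_N$ tends to $0$ weakly, since it lies in $z^NH^2$ and has bounded norm. Set $k_0=\sum a_kz^k$ and $k_j=\sum_k a_{k+1,j}z^k$. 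The coefficient sums converge in $H^2$ and multiplication by the bounded functions entering $F_0$ is continuous, so weak limits give the representation $F=F_0k_0+z\sum_jk_je_j$. A standard argument, exactly as in Hitt's proof, upgrades this to norm convergence and shows $\|F_N\|\to0$, which yields the isometric identity.

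\textbf{Step 3 ($S^*$-invariance of $\cl K$).} Define $\cl K$ as the set of all tuples $(k_0,k_1,\dots,k_n)$ arising in this way. The map $F\mapsto(k_0,\dots,k_n)$ is isometric, so $\cl K$ is closed. Applying the construction to $F_1$ produces exactly the backward-shifted tuple, so $S^*\cl K\subseteq\cl K$. The main obstacle is the well-definedness and linearity of this map, since at each stage the decomposition must be canonical. This is guaranteed by the orthogonal choices made in Step 1: $a_0$ is determined by $P_{\cl W}$, and the pair $(F_1,a_{1,\cdot})$ is determined because $\cl M\perp\cl D$. A second delicate point is that $F_0k_0$ lies in $H^2$ with the stated norm for an arbitrary $k_0\in H^2(\mathbb D,\mathbb C^r)$; here I would use that the columns of $F_0$ have the Hitt-type isometric multiplier property, inherited from the norm identity. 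The converse inclusion, that every tuple in $\cl K$ gives an element of $\cl M$, follows by defining $\cl K$ as the image, so nothing further is needed there.

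\textbf{Step 4 (case (2)).} When every function in $\cl M$ vanishes at $0$, we have $\cl W=\{0\}$. Step 1 then starts directly from $S^*F=F_1+\sum a_{1,j}e_j$, the $k_0$ term disappears, and the same argument gives the representation and norm identity in (2).
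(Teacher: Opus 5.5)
Your route is the standard one behind the cited result: Hitt's algorithm with the defect term split off at each step, then $S^*$-invariance of $\mathcal K$ from the fact that running the algorithm on $F_1$ returns the backward-shifted tuple. Steps 1 and 3 are essentially fine, with one small slip. $F_0a_0$ need not be orthogonal to $zF_1$ by itself, since $zF_1$ need not lie in $\mathcal M$. What you actually use is $F_0a_0\perp z(F_1+\sum_j a_{1,j}e_j)=F-P_{\mathcal W}F\in\mathcal M\cap zH^2$.

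\textbf{The gap: the norm identity.} Steps 1--2 only give
\[
\|F\|^2=\|k_0\|^2+\sum_{j=1}^n\|k_j\|^2+\lim_{N\to\infty}\|F_N\|^2 ,
\]
so $F\mapsto(k_0,\dots,k_n)$ is just an injective contraction. The isometric formula, and with it the closedness of $\mathcal K$ you derive in Step 3, both rest on $\|F_N\|\to0$. You only assert this (``a standard argument, exactly as in Hitt's proof''), and nothing in your setup yields it. You do get two things:
\begin{itemize}
\item $F_N\to0$ weakly, because the algorithm run on $F_N$ produces $S^{*N}(k_0,\dots,k_n)$, so $F_N\to0$ pointwise with bounded norm;
\item $\|F_N\|$ is nonincreasing.
\end{itemize}
These do not force norm convergence; think of $z^N$ in $H^2$. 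Equivalently, you must show the contraction has closed range. Then the open mapping theorem gives $\|F_N\|\le C\|S^{*N}(k_0,\dots,k_n)\|\to0$. This is the substantive part of the theorem. It must use the $H^2(\mathbb D,\mathbb C^m)$ geometry of $\mathcal M$, not just the recursive bookkeeping, and it has to be carried out with the defect terms present.

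\textbf{The supporting claims you offer are false.}
\begin{itemize}
\item The columns of $F_0$ (and the $e_j$) are in general only $H^2$ functions, not bounded ones. For any unbounded $g\in H^2$ with $g(0)\ne0$, the space $\operatorname{span}\{g\}$ is nearly $S^*$-invariant with $\mathcal W=\operatorname{span}\{g\}$ and $F_0=g/\|g\|$. So there is no ``continuity of multiplication by $F_0$''. The representation must instead be obtained pointwise on $\mathbb D$: $H^2$ convergence of the partial sums gives locally uniform convergence, and $\|z^NF_N(z)\|\le|z|^N\|F\|(1-|z|^2)^{-1/2}$.
\item $F_0$ is not an isometric multiplier on all of $H^2(\mathbb D,\mathbb C^r)$. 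For $\mathcal M=\mathcal K_\theta$ with $\theta$ nonconstant inner and $\theta(0)\ne0$, one has $F_0=(1-\overline{\theta(0)}\theta)/(1-|\theta(0)|^2)^{1/2}$, whose modulus on $\mathbb T$ is not constant.
\item The theorem only says that the combined expression $F_0k_0+z\sum_jk_je_j$ has norm $\|(k_0,\dots,k_n)\|$ for tuples in $\mathcal K$. That is exactly what remains unproved, so it cannot be ``inherited'' to support itself.
\end{itemize}
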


The preceding theorem provides the structural representation needed for
our kernels. Since the defect estimate established above is finite, the
representation applies directly to $\ker\widetilde{A_n}$ and yields the
following description.

\begin{theorem} \label{R44}
Suppose $\widetilde A_n$ is the finite-rank
perturbation defined in Theorem \ref{R43}, and let $\mathcal F_n$ be a defect
space of $\ker\widetilde A_n$ with $\dim\mathcal F_n=l(n)$.
Let $\{e_i\}_{i=1}^{l(n)}$ be an orthonormal basis of $\mathcal F_n$.
\begin{enumerate}
\item[(i)] If there exists a function in $\ker\widetilde{A_n}$ which does not vanish at 0, then there exists an $S^*$-invariant subspace $ \cl K_n
\subseteq H^2(\mathbb{D},\mathbb{C}^{l(n)+1})$ such that 
\begin{align*}
   \ker\widetilde{A_n}
    =
    \left\{f=
    gk+
    z\sum_{i=1}^{l(n)}k_ie_i:
    (k,k_1,\ldots,k_{l(n)})\in\cl{K}_n
    \right\}  
\end{align*}    
where $g$ is the function of unit norm in $\ker\widetilde{A_n}$, orthogonal to $\ker\widetilde{A_n} \cap zH^2(\bb D)$, satisfying $g(0)>0$. Moreover, for all $f \in \ker\widetilde{A_n}$,  
\begin{align*}
    \|f\|^2= \|k\|^2 + \sum_{i=1}^{l(n)}\|k_i\|^2.
\end{align*}
\item[(ii)]  If all functions in $\ker\widetilde{A_n}$ vanish at $0$, then there exists a $S^*$-invariant subspace $\mathcal{K}_n\subset H^2(\mathbb{D},\mathbb{C}^{l(n)})$ such that
$$\ker\widetilde{A_n}=\left\{f=z\sum_{i=1}^{l(n)}k_ie_i : (k_1,\dots, k_{l(n)})\in \mathcal{K}_n\right\}.$$
 Moreover, for all $f \in \ker\widetilde{A_n}$,
\begin{eqnarray*}
    \|f\|^2=\sum_{i=1}^{l(n)}\|k_i\|^2.
\end{eqnarray*} 
Furthermore, if $\ker \widetilde{A_n}\neq\{0\}$ and $\ell$ is the greatest non-negative integer such that $ \ker \widetilde{A_n}\subseteq z^\ell H^2(\bb D)$, then $ \widetilde{\mathcal{L}}_n
:= z^{-\ell}\ker \widetilde{A_n}$ is a nearly $S^*$-invariant subspace with finite defect, and consequently, $\widetilde{\mathcal{L}}_n$ admits a representation of the form described in (i).
\end{enumerate}

\end{theorem}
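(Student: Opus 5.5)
The plan is to combine Theorem \ref{R43} with the representation Theorem \ref{VVD}, specialized to the scalar case $m=1$. By Theorem \ref{R43}, $\ker\widetilde{A_n}$ is a closed subspace of $H^2(\mathbb D)$ (it is the kernel of a bounded operator on $\cl K_\theta$, and $\cl K_\theta$ is closed). It is nearly $S^*$-invariant with a defect space $\mathcal F_n$ of dimension $l(n)\le 2n+1$. We fix an orthonormal basis $\{e_i\}_{i=1}^{l(n)}$ of $\mathcal F_n$ and apply Theorem \ref{VVD} with $\cl M=\ker\widetilde{A_n}$, $m=1$, and defect $l(n)$.

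For (i), the key step is to identify $\cl W=\ker\widetilde{A_n}\ominus(\ker\widetilde{A_n}\cap zH^2(\mathbb D))$. Evaluation at $0$ is injective on $\cl W$ and takes values in $\mathbb C$, so $\dim\cl W=r\le1$. Since some element of the kernel does not vanish at $0$, its component in $\cl W$ is nonzero, so $r=1$. Hence $\cl W$ is spanned by a unit vector, and multiplying by a unimodular constant we may take it to be $g$ with $g(0)>0$. The matrix $F_0$ of Theorem \ref{VVD} is then the $1\times1$ matrix $g$, and $k_0=k\in H^2(\mathbb D)$. Theorem \ref{VVD}(1) gives an $S^*$-invariant $\cl K_n\subseteq H^2(\mathbb D,\mathbb C^{l(n)+1})$ together with the stated representation and norm identity.

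For (ii), part (2) of Theorem \ref{VVD} applies directly and gives the representation with $\cl K_n\subset H^2(\mathbb D,\mathbb C^{l(n)})$ and the norm identity. For the final assertion, $\ker\widetilde{A_n}\neq\{0\}$ means its nonzero elements have zeros of finite order at $0$, so a greatest $\ell$ with $\ker\widetilde{A_n}\subseteq z^\ell H^2$ exists. Division by $z^\ell$ is an isometry $z^\ell H^2\to H^2$, so $\widetilde{\mathcal L}_n$ is closed, and by maximality of $\ell$ it contains a function not vanishing at $0$.

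The main obstacle is checking that $\widetilde{\mathcal L}_n$ is nearly $S^*$-invariant with finite defect. Take $h=z^{-\ell}f$ with $f\in\ker\widetilde{A_n}$ and $h(0)=0$. Then $f\in z^{\ell+1}H^2$, and by $\ell$ applications of the defect relation one gets $S^*f\in\ker\widetilde{A_n}+\mathcal F_n$. A cleaner route, which I would try first, is to write $S^*h=z^{-\ell}(S^*f)$, valid since $f$ vanishes at $0$ to order at least $\ell+1$. Then decompose $S^*f=u+v$ with $u\in\ker\widetilde{A_n}\subseteq z^\ell H^2$ and $v\in\mathcal F_n$. Because $S^*f\in z^\ell H^2$, we get $v\in\mathcal F_n\cap z^\ell H^2$. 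Hence $S^*h\in\widetilde{\mathcal L}_n+z^{-\ell}(\mathcal F_n\cap z^\ell H^2)$, and the second summand has dimension at most $l(n)$. Projecting this space orthogonally onto $\widetilde{\mathcal L}_n^\perp$ does not increase its dimension, as in the proof of Theorem \ref{R42}. So $\widetilde{\mathcal L}_n$ is nearly $S^*$-invariant with defect at most $l(n)$, and part (i), that is, Theorem \ref{VVD}(1), then applies to it.
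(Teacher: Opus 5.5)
Your proposal is correct and follows essentially the same route as the paper: Theorem \ref{R43} combined with Theorem \ref{VVD} (with $\dim\cl W=1$ in case (i)) gives the two representations, and for $\widetilde{\mathcal L}_n$ you use the same observation that $S^*h=z^{-\ell}S^*f$ and that the defect component of $S^*f$ must lie in $\mathcal F_n\cap z^\ell H^2(\mathbb D)$. The only blemish is the aside about ``$\ell$ applications of the defect relation'': since $f$ already vanishes at $0$, a single application gives $S^*f\in\ker\widetilde{A_n}+\mathcal F_n$, and repeated application would not stay in that sum anyway, so drop that sentence in favour of the cleaner route you then give.
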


\begin{proof}
We have observed that, by Theorem \ref{R43}, $\ker  \widetilde{A_n}$ is nearly $S^*$-invariant with finite defect. Now, suppose that $\ker \widetilde{A_n}$ contains a function which does
not vanish at the origin. Then the subspace $\ker \widetilde{A_n}\ominus
(\ker \widetilde{A_n}\cap zH^2(\bb D))$ is one-dimensional. Let $g$ be its normalized basis vector. Then by the first part
of Theorem \ref{VVD}, there exists an $S^*$-invariant subspace $\mathcal{K}_n
\subseteq H^2(\mathbb{D},\mathbb{C}^{l(n)+1})$ such that
\begin{align*}
 \ker \widetilde{A_n}
=
\left\{
gk+
z\sum_{i=1}^{l(n)}k_ie_i:
(k,\ldots,k_{l(n)})\in\mathcal{K}_n
\right\}.      
\end{align*}
Moreover, for every $f \in \ker \widetilde{A_n}$,  
\begin{align*}
    \|f\|^2= \|k\|^2 + \sum_{i=1}^{l(n)}\|k_i\|^2.
\end{align*}

Further, suppose that every function in $\ker \widetilde{A_n}$ vanishes at
the origin. Then, by second part of Theorem \ref{VVD}, there exits an $S^*$-invariant
subspace such that
$$\ker \widetilde{A_n}=\left\{f=z\sum_{i=1}^{l(n)}k_ie_i : (k_1,\dots, k_{l(n)})\in \mathcal{K}_n\right\}.$$
 Further, for every $f \in \ker \widetilde{A_n}
 $,
\begin{eqnarray*}
    \|f\|^2=\sum_{i=1}^{l(n)}\|k_i\|^2.
\end{eqnarray*} 

Finally, suppose that $\ker \widetilde{A_n}\neq\{0\}$ and let $\ell$ be
the greatest non-negative integer such that $\ker \widetilde{A_n}\subseteq z^\ell H^2$.
 Since multiplication by $z^\ell$ is an isometry from $H^2(\bb D)$ onto $z^\ell H^2(\bb D)$, and $\ker \widetilde{A_n}$ is closed, therefore $\widetilde{\mathcal{L}}_n=z^{-\ell}\ker \widetilde{A_n}$ is closed, and by
maximality of $\ell$, it is not contained in $zH^2(\mathbb D)$. Now, we show that $\widetilde{\mathcal{L}}_n$ is nearly
$S^*$-invariant with finite-dimensional defect. Let $h\in\widetilde{\mathcal{L}}_n$ with $ h(0)=0$. Then $z^\ell h\in \ker \widetilde{A_n} \text{ and } z^\ell h\in z^{\ell+1}H^2(\bb D)$. Consequently, $S^*(z^\ell h)=z^\ell S^*h$. Further, let the defect space of $\ker \widetilde{A_n}$ is $\cl F_n$, then 
\[
z^\ell S^*h
\in
\ker \widetilde{A_n}+\cl F_n.
\]
Hence there exist $p\in \ker \widetilde{A_n}$ and
$q\in \cl F_n$ such that $z^\ell S^*h=p + q$. 
Both $z^\ell S^*h$ and $p$ belong to $z^\ell H^2(\bb D)$.
Therefore,
\[
q=z^\ell S^*h-p
\in
\cl F_n\cap z^\ell H^2(\bb D).
\]
It follows that
\[
S^*h
\in \widetilde{\cl L_n}+
z^{-\ell}
\bigl(\cl F_n\cap z^\ell H^2(\bb D)\bigr).
\]
The second space is finite-dimensional. Hence
$\widetilde{\mathcal{L}}_n$ is nearly $S^*$-invariant with
finite-dimensional defect. Also, since $\widetilde{\mathcal{L}}_n\not\subseteq zH^2(\bb D)$, the first part of the theorem applies to
$\widetilde{\mathcal{L}}_n$. This completes the proof.
\end{proof}

\section{Finite-rank perturbations of truncated Toeplitz
operators on multiband spaces}\label{Mul}

Suppose $\theta$ is an inner function and $\phi,\psi\in L^\infty(\mathbb T)$ are two unimodular functions satisfying $\phi \cl K_\theta\perp\psi \cl K_\theta$, then we define the multiband space as
\[
\cl M:=\phi \cl K_\theta\oplus\psi \cl K_\theta.
\]
Thus every $f\in \cl M$ has a unique representation $f=\phi f_1+\psi f_2$, where $f_1,f_2\in \cl K_\theta$. Since $\phi$ and $\psi$ are unimodular and
$\phi \cl K_\theta\perp\psi \cl K_\theta$, we have
\[
\|\phi f_1+\psi f_2\|^2
=
\|f_1\|_2^2+\|f_2\|_2^2.
\]
For $g\in L^\infty(\mathbb T)$, the truncated Toeplitz operator on $\cl M$ with symbol $g$ is defined as
\[
A_g^\cl M f=P_\cl M(gf),
\qquad f\in \cl M,
\]
where $P_\cl M$ denotes the orthogonal projection from $L^2(\mathbb T)$
onto $\cl M$.
Using the unitary map $U:\cl M\longrightarrow \cl K_\theta(\mathbb D,\mathbb C^2)$ defined by
\[
U(\phi f_1+\psi f_2)
=
\begin{pmatrix}
f_1\\
f_2
\end{pmatrix},
\]
one can show that the operator $A_g^{\cl M}$ is unitarily equivalent to the
matrix-valued truncated Toeplitz operator $A_G^\theta$ acting on
$\cl K_\theta(\mathbb D,\mathbb C^2)$, where
\[
G=
\begin{pmatrix}
g&\overline{\varphi}\,g\,\psi\\
\overline{\psi}\,g\,\varphi&g
\end{pmatrix}
\in L^\infty(\mathbb T)^{2\times2}.
\]
Further, if we consider the following $4\times4$ matrix-valued symbol
\[
\mathcal G=
\begin{pmatrix}
\theta^* I_2&0\\
G&\theta I_2
\end{pmatrix},
\]
and let $T_{\mathcal G}$ denotes the corresponding Toeplitz operator. Then the kernel of the matrix-valued truncated Toeplitz operator  $A_G^\theta$ is
obtained by projecting the kernel of $T_{\mathcal G}$ onto the
first two coordinates. That is,
\[
\ker A_G^\theta=P_2(\ker T_{\mathcal G}).
\]
We quote the following result which describes the nearly \(S^*\)-invariant structure of the kernel of a \(2\times2\) matrix-valued truncated Toeplitz operator.
\begin{theorem}\cite[Theorem 5.2]{ROL1} \label{D2}
Let $G$ be a $2\times2$ matrix-valued symbol with entries in
$L^\infty(\mathbb T)$. Then $\ker A_G^\theta$ is nearly $S^*$-invariant with defect $2$.
\end{theorem}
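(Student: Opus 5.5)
The plan is to run the argument of Theorem \ref{R32} in the $2\times 2$ setting, with $\Theta=\theta I_2$ in place of $\Theta_n$. By Lemma \ref{R31}, applied to the inner function $\theta I_2$ and the symbol $G$, we have
\[
\ker A_G^\theta=P_2(\ker T_{\mathcal G}),\qquad \mathcal G=\begin{pmatrix}\theta^* I_2&0\\ G&\theta I_2\end{pmatrix}.
\]
Here $P_2$ is the projection of $H^2(\mathbb D,\mathbb C^4)$ onto the first two coordinates.

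The starting input is the standard fact that a Toeplitz kernel $\ker T_{\mathcal G}$ is nearly $S^*$-invariant with zero defect. Indeed, if $F\in\ker T_{\mathcal G}$ and $F(0)=0$, then $P_+(\mathcal G F)=0$. Hence $P_+(\mathcal G\,\bar z F)=P_+(\bar z\,\mathcal G F)$, and this vanishes because $\mathcal GF$ has no nonnegative Fourier coefficients. Therefore $S^*F=\bar zF$ lies in $\ker T_{\mathcal G}$.

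With that in hand, two routes are available.

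\emph{Route one: Theorem \ref{R42}.} Apply it with $m=4$, $i=2$, $d=0$. This immediately gives that $P_2(\ker T_{\mathcal G})$ is nearly $S^*$-invariant with defect at most $0+4-2=2$.

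\emph{Route two: the point-evaluation argument of Theorem \ref{R32}, written out directly.} First decompose
\[
\ker T_{\mathcal G}=\mathcal W\oplus\bigl(\ker T_{\mathcal G}\cap zH^2(\mathbb D,\mathbb C^4)\bigr).
\]
Let $W_1,\dots,W_r$ be a basis of $\mathcal W$; the values $W_j(0)$ are linearly independent. Now take $f\in\ker A_G^\theta$ with $f(0)=0$. Write $f=P_2F$ with $F=\sum a_jW_j+F_0$, where $F_0$ lies in the second summand. Then
\[
S^*f=P_2\Bigl(\sum a_jW_j\Bigr)/z+P_2(S^*F_0).
\]
The second term lies in $\ker A_G^\theta$. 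The first term lies in the space
\[
\mathcal D=\Bigl(\tfrac{\operatorname{span}\{P_2W_j\}}{z}\cap H^2(\mathbb D,\mathbb C^2)\Bigr).
\]
To bound $\dim\mathcal D$, use rank--nullity exactly as before. We have $\dim\mathcal D\le r-\operatorname{rank}(V_1)\le\operatorname{rank}V_2\le 2$, where $V_1$ and $V_2$ are the top and bottom $2\times r$ blocks of $V=[W_1(0)\cdots W_r(0)]$. If needed, project $\mathcal D$ onto $(\ker A_G^\theta)^\perp$, which does not increase its dimension. This gives a defect of at most $2$.

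The main obstacle is that the statement says the defect is exactly $2$, not at most $2$. Read literally, this cannot hold in general: if $G=0$, the kernel is all of $\mathcal K_\theta(\mathbb D,\mathbb C^2)$, which is genuinely $S^*$-invariant. So I would interpret the quoted result, as the authors do with their Corollary, as asserting that $2$ is a valid defect bound, i.e.\ that there is a defect space of dimension at most $2$. That is what both routes prove.

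Closedness of $P_2(\ker T_{\mathcal G})$ is not an issue, since it equals $\ker A_G^\theta$, which is closed.

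One subtlety needs checking: the vector-valued Toeplitz kernel near-invariance must hold for matrix symbols with non-scalar entries such as $\theta^*I_2$. The computation above uses only that $\bar z$ is a scalar commuting with $\mathcal G$, so it goes through without change.
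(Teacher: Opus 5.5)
Your proof is correct, and Route one is exactly how the paper recovers this statement: in the case $n=0$ of Theorem~\ref{R52} one has $\Phi_0=\theta I_2$ and $\mathcal A_0=G$, the paper realizes $\ker A_G^\theta=P_2(\ker T_{\mathcal G})$ via Lemma~\ref{R31}, and it then applies Theorem~\ref{R42} with $m=4$, $i=2$, $d=0$; your Route two is just the point-evaluation and rank argument of Theorem~\ref{R32}, written out. Your reading of ``defect $2$'' as ``defect at most $2$'' matches the paper's usage, and under the paper's definition a smaller defect space can always be enlarged by vectors from $\theta H^2(\mathbb D,\mathbb C^2)$, which is orthogonal to $\ker A_G^\theta$, so a $2$-dimensional defect space exists literally and only the minimal defect can drop below $2$, as your example $G=0$ shows.
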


The multiband space $\mathcal M=\phi \cl K_\theta\oplus\psi \cl K_\theta$
is not, in general, a subspace of $H^2(\mathbb D)$. Therefore the usual backward
shift operator $S^*$ on $H^2(\mathbb D)$ does not act naturally on $\mathcal M$.
That is why we transport the vector-valued backward shift to $\mathcal M$ through the unitary operator $U$.

We define $S_{\mathcal M}^*:=U^*S^*U$. Thus, for $f=\phi f_1+\psi f_2\in\mathcal M$, $$S_{\mathcal M}^*f
=\phi S^*f_1+\psi S^*f_2.$$
Accordingly, a subspace $\mathcal N$ of the multiband space $\mathcal M=\phi \cl K_\theta\oplus\psi \cl K_\theta$ is called 
nearly $S_{\mathcal M}^*$-invariant with defect at most $d$ if
$U\mathcal N$ is nearly $S^*$-invariant in
$\cl K_\theta(\mathbb D,\mathbb C^2)$ with defect at most $d$.

\begin{remark}By Theorem~\ref{D2} and the unitary equivalence induced by $U$,
$U(\ker A_g^{\cl M})$ is nearly $S^*$-invariant with defect at most $2$.
Equivalently, $\ker A_g^{\cl M}$ is nearly
$S_{\cl M}^*$-invariant with defect at most $2$.  
\end{remark}

For $u_j,v_j$ in $\cl M$, consider the finite rank operator $\displaystyle{\sum_{j=1}^n u_j\otimes v_j}$, where $(u_j\otimes v_j)f
=\langle f,v_j\rangle_{\cl M} u_j$. Then the perturbed truncated Toeplitz operator on $\cl M$ is given by 
\begin{align} \label{EqA}
 \widetilde A_n^{\cl M}
:=
A_g^\cl M-\sum_{j=1}^n u_j\otimes v_j.   
\end{align}

Let $p_j=Uu_j$, and $q_j=Uv_j$. Since $U$ is unitary, we have $U(u_j\otimes v_j)U^*
=
p_j\otimes q_j$. Thus
\[
U\widetilde A_nU^*
=
A_G^\theta-\sum_{j=1}^n p_j\otimes q_j.
\]
Therefore, if we define
\begin{align} \label{EqT}
\widetilde{T_n}
:=
A_G^\theta-\sum_{j=1}^n p_j\otimes q_j,    
\end{align}
then, we have
\[
U(\ker\widetilde A_n^\cl M)=\ker \widetilde{T_n}.
\]

\subsection{Matrix realization of the finite rank perturbation:}
Consider the finite rank perturbed truncated Toeplitz operator $\widetilde{T_n}$ on $\cl K_\theta(\mathbb D,\mathbb C^2)$. Set $\displaystyle{\Phi_n
=
\operatorname{diag}
\big(
\theta,\theta,
\underbrace{z,\ldots,z}_{n\text{ copies}}
\big)}$, then we have the following decomposition 
\[
\cl K_{\Phi_n}
=
\cl K_\theta(\mathbb D,\mathbb C^2)\oplus\mathbb C^n.
\]
For $\displaystyle{G=
\begin{pmatrix}
g_{11}&g_{12}\\
g_{21}&g_{22}
\end{pmatrix}}$, with each $g_{ij}$ in $L^\infty(\mathbb T)$ and $\displaystyle{p_j=
\begin{pmatrix}
p_{j1}\\
p_{j2}
\end{pmatrix}}$, $\displaystyle{q_j=
\begin{pmatrix}
q_{j1}\\
q_{j2}
\end{pmatrix}}$ in $\cl K_\theta(\mathbb D,\mathbb C^2)$, we define the $(n+2)\times(n+2)$ matrix-valued symbol
\[
\mathcal A_n=
\begin{pmatrix}
G&-p_1&-p_2&\cdots&-p_n\\
q_1^*&-1&0&\cdots&0\\
q_2^*&0&-1&\cdots&0\\
\vdots&\vdots&\vdots&\ddots&\vdots\\
q_n^*&0&0&\cdots&-1
\end{pmatrix}, \text{ where } \displaystyle{q_j^*
=
\begin{pmatrix}
\overline{q_{j1}}&
\overline{q_{j2}}
\end{pmatrix}}.
\]

The above construction now connects the kernel of $\widetilde{T_n}$ with
the kernel of the associated matrix-valued truncated Toeplitz operator.
More precisely, the following result shows that
$\ker\widetilde{T_n}$ is recovered as a coordinate projection of
$\ker A_{\mathcal A_n}^{\Phi_n}$.

\begin{theorem} \label{R51}
Suppose $\displaystyle{\widetilde{T_n}=A_G^\theta-\sum_{j=1}^n p_j\otimes q_j}$ is the finite-rank perturbation of the matrix-valued truncated Toeplitz operator $A_G^\theta$ on
$\cl K_\theta(\mathbb D,\mathbb C^2)$ as defined in (\ref{EqT}). Then $$\ker \widetilde{T_n}
=
P_2\left(\ker A_{\mathcal A_n}^{\Phi_n}\right),$$ where $P_2$ is the projection onto the first two coordinates.

\end{theorem}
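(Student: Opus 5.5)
The argument mirrors the proof of Theorem \ref{R41}, now with the first coordinate replaced by the two-dimensional block $\cl K_\theta(\mathbb D,\mathbb C^2)$. First we compute $A_{\mathcal A_n}^{\Phi_n}$ explicitly. Write an element of $\cl K_{\Phi_n}=\cl K_\theta(\mathbb D,\mathbb C^2)\oplus\mathbb C^n$ as $F=(f,c_1,\ldots,c_n)^T$, with $f=(f_1,f_2)^T\in\cl K_\theta(\mathbb D,\mathbb C^2)$ and $c_j\in\mathbb C$. Since $\Phi_n$ is block diagonal, $P_{\Phi_n}$ acts coordinatewise: it is $P_\theta$ on each of the first two entries and $P_z$ on each of the remaining $n$ entries. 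Multiplying out,
\[
A_{\mathcal A_n}^{\Phi_n}F=
\begin{pmatrix}
P_\theta I_2\bigl(Gf-\sum_{j=1}^n c_jp_j\bigr)\\
P_z(q_1^*f-c_1)\\
\vdots\\
P_z(q_n^*f-c_n)
\end{pmatrix}.
\]

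Next we identify each piece. In the first block, $P_\theta I_2(Gf)=A_G^\theta f$. Because each $p_j\in\cl K_\theta(\mathbb D,\mathbb C^2)$ and each $c_j$ is a constant, we also have $P_\theta I_2(c_jp_j)=c_jp_j$.

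The remaining blocks are where care is needed. We must check that $P_z(q_j^*f)=\langle f,q_j\rangle$ in the $\mathbb C^2$-valued inner product. Here $q_j^*f=\overline{q_{j1}}f_1+\overline{q_{j2}}f_2$ is a scalar $L^1$ function, and $P_z$ picks out its zeroth Fourier coefficient. That coefficient equals
\[
\int_{\mathbb T}\bigl(f_1\overline{q_{j1}}+f_2\overline{q_{j2}}\bigr)\,dm=\langle f,q_j\rangle_{L^2(\mathbb T,\mathbb C^2)}.
\]
This is also the $\cl M$-inner product transported by the unitary $U$, which is consistent with how $\widetilde{T_n}$ is defined in \eqref{EqT}. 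Strictly, $P_z$ is applied to an $L^1$ function, but it is interpreted as the zeroth coefficient exactly as in the scalar case of Theorem \ref{R41}. Hence
\[
A_{\mathcal A_n}^{\Phi_n}F=\bigl(A_G^\theta f-\textstyle\sum_j c_jp_j,\ \langle f,q_1\rangle-c_1,\ldots,\langle f,q_n\rangle-c_n\bigr)^T.
\]

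With this formula both inclusions follow directly. If $f\in\ker\widetilde{T_n}$, set $c_j=\langle f,q_j\rangle$. Then the first block becomes $\widetilde{T_n}f=0$ and the remaining entries vanish, so $(f,c_1,\ldots,c_n)^T\in\ker A_{\mathcal A_n}^{\Phi_n}$, and therefore $f\in P_2(\ker A_{\mathcal A_n}^{\Phi_n})$. Conversely, suppose $F=(f,c_1,\ldots,c_n)^T\in\ker A_{\mathcal A_n}^{\Phi_n}$. The last $n$ entries force $c_j=\langle f,q_j\rangle$. Substituting these into the first block gives
\[
A_G^\theta f-\sum_{j=1}^n\langle f,q_j\rangle p_j=0,
\]
that is, $\widetilde{T_n}f=0$. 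So $P_2F=f\in\ker\widetilde{T_n}$, which proves $\ker\widetilde{T_n}=P_2(\ker A_{\mathcal A_n}^{\Phi_n})$.
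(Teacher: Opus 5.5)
Your proposal is correct and follows the paper's proof essentially step for step. You compute $A_{\mathcal A_n}^{\Phi_n}F$ coordinatewise using $P_z(q_j^*f)=\langle f,q_j\rangle$, then read off both inclusions by choosing $c_j=\langle f,q_j\rangle$. Your explicit remarks that $P_\theta(c_jp_j)=c_jp_j$, and that $P_z$ must be read as taking the zeroth Fourier coefficient of an $L^1$ function, are details the paper passes over silently. The second point matters because $p_j,q_j$ need not be bounded, so $\mathcal A_n$ need not lie in $L^\infty$.
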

\begin{proof}
First we observe that, since $\displaystyle{\Phi_n
=
\operatorname{diag}
\big(
\theta,\theta,
\underbrace{z,\ldots,z}_{n\text{ copies}}
\big)}$ and $\cl K_z=H^2(\bb D)\ominus zH^2(\bb D)=\mathbb C$, therefore we have the following orthogonal decomposition
$$\cl K_{\Phi_n}=\cl K_\theta(\mathbb D,\mathbb C^2)\oplus\mathbb C^n.$$ Consequently, every element \(F\in \cl K_{\Phi_n}\) can be uniquely expressed as $\displaystyle{F=
\begin{pmatrix}
f\\
c_1\\
\vdots\\
c_n
\end{pmatrix}}$, where $f\in \cl K_\theta(\mathbb D,\mathbb C^2)$ and $c_1,\ldots,c_n\in\mathbb C$. By the definition of \(\mathcal A_n\), we obtain
$$
\mathcal A_nF
=
\begin{pmatrix}
Gf-\displaystyle\sum_{j=1}^n c_jp_j\\[2mm]
q_1^*f-c_1\\
\vdots\\
q_n^*f-c_n
\end{pmatrix}.
$$
We now compress this vector onto \(\cl K_{\Phi_n}\). For the first component, the compression is precisely the compression of multiplication by \(G\) onto \(\cl K_\theta(\mathbb D,\mathbb C^2)\), namely \(A_G^\theta\). 

Further, since
$
\displaystyle{q_j^*f=\begin{pmatrix}
\overline{q_{j1}}&
\overline{q_{j2}}
\end{pmatrix}
\begin{pmatrix}
f_1\\
f_2
\end{pmatrix}
=\overline{q_{j1}}f_1 + \overline{q_{j2}}f_2},
 $
therefore, $P_z(q_j^*f)=P_z(\overline{q_{j1}}f_1)+P_z(\overline{q_{j2}}f_2)=\langle f,q_j\rangle$. Now, each of the remaining model spaces is \(\cl K_z=\mathbb C\), the corresponding scalar components already belong to \(\cl K_z\) and hence are unchanged by the compression. Thus, we obtain
$$
A_{\mathcal A_n}^{\Phi_n}F
=
\begin{pmatrix}
A_G^\theta f-\displaystyle\sum_{j=1}^n c_jp_j\\[2mm]
\langle f,q_1\rangle-c_1\\
\vdots\\
\langle f,q_n\rangle-c_n
\end{pmatrix}.
$$
It follows that
$$
F\in\ker A_{\mathcal A_n}^{\Phi_n} \iff A_G^\theta f-\sum_{j=1}^n c_jp_j=0 \text{ and } c_j=\langle f,q_j\rangle,
$$
for every $j=1,\ldots,n$.

 Now, substituting \(c_j=\langle f,q_j\rangle\), we get $\displaystyle{A_G^\theta f
-
\sum_{j=1}^n
\langle f,q_j\rangle p_j
=0}$. That is,
$$
\left(
A_G^\theta-\sum_{j=1}^n p_j\otimes q_j
\right)f=0.
$$
This shows that $\widetilde{T_n}f=0$. Therefore, for every \(f\in\ker \widetilde{T_n}\), the vector
$$
F
=
\begin{pmatrix}
f\\
\langle f,q_1\rangle\\
\vdots\\
\langle f,q_n\rangle
\end{pmatrix}
$$
belongs to \(\ker A_{\mathcal A_n}^{\Phi_n}\).

 Conversely, suppose that $\displaystyle{F=
\begin{pmatrix}
f\\
c_1\\
\vdots\\
c_n
\end{pmatrix}
\in\ker A_{\mathcal A_n}^{\Phi_n}}$. Then the lower \(n\) components imply $c_j=\langle f,q_j\rangle$ for every $j=1,\ldots,n$. Also, the first component implies $\widetilde{T_n}f=0$. Thus
$$
f\in\ker \widetilde{T_n}.
$$
Hence the first component of every vector in $\ker A_{\mathcal A_n}^{\Phi_n}$ belongs to $\ker \widetilde{ T_n}$, and every vector in $\ker \widetilde{ T_n}$  admits the above lift to $\ker A_{\mathcal A_n}^{\Phi_n}$. Thus, we obtain
$$
\ker \widetilde{T_n}
=
P_2\left(\ker A_{\mathcal A_n}^{\Phi_n}\right).
$$
This completes the proof.
\end{proof}

\subsection{Toeplitz-kernel realization:}
Set $N=n+2$, then $\mathcal A_n$ is an $N\times N$ matrix. Now, consider a  $2N\times2N$ matrix-valued symbol defined by
\[
\mathfrak G_n
:=
\begin{pmatrix}
\Phi_n^*&0\\
\mathcal A_n&\Phi_n
\end{pmatrix}.
\]
Let $T_{\mathfrak G_n}$ denotes the corresponding Toeplitz operator.
Then the standard matrix-valued truncated Toeplitz realization gives $\ker A_{\mathcal A_n}^{\Phi_n}
=
P_N(\ker T_{\mathfrak G_n})$,  where $P_N$ is the projection onto the first $N$ coordinates. Combining this identity with Theorem \ref{R51}, we get
\[
\ker \widetilde{T_n}
=P_2(\ker A_{\mathcal A_n}^{\Phi_n})=
P_2(P_N\big(\ker T_{\mathfrak G_n})\big)=P_2\big(\ker T_{\mathfrak G_n})
.
\]
This representation is the key tool in the analysis of the finite-rank
perturbation.

\begin{theorem}\label{R52}
Suppose $\displaystyle{\widetilde A_n^{\cl M}=
A_g^{\cl M}-\sum_{j=1}^n u_j\otimes v_j}$
is the perturbed truncated Toeplitz operator on the multiband space $\cl M$ as defined in (\ref{EqA}). Then $\ker\widetilde A_n^{\cl M}$ is nearly $S_{\cl M}^*$-invariant with finite defect at most $2n+2$.

\end{theorem}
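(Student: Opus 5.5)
Via the unitary map $U$ it suffices, by the definition of near $S_{\cl M}^*$-invariance, to show that $\ker\widetilde{T_n}=U(\ker\widetilde A_n^{\cl M})\subset \cl K_\theta(\mathbb D,\mathbb C^2)\subset H^2(\mathbb D,\mathbb C^2)$ is nearly $S^*$-invariant with defect at most $2n+2$. By Theorem~\ref{R51} we have $\ker\widetilde{T_n}=P_2\bigl(\ker A_{\mathcal A_n}^{\Phi_n}\bigr)$, where $A_{\mathcal A_n}^{\Phi_n}$ is an $N\times N$ matrix-valued truncated Toeplitz operator with $N=n+2$. The argument then mirrors the proof of Theorem~\ref{R43}: first bound the defect of $\ker A_{\mathcal A_n}^{\Phi_n}$, then pay an additional cost for projecting onto the first two coordinates, using Theorem~\ref{R42}.

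\emph{Step 1: defect of $\ker A_{\mathcal A_n}^{\Phi_n}$.} By Lemma~\ref{R31}, $\ker A_{\mathcal A_n}^{\Phi_n}=P_N(\ker T_{\mathfrak G_n})$. A Toeplitz kernel is nearly $S^*$-invariant with defect $0$, so Theorem~\ref{R42} applied with $m=2N$ and $i=N$ gives defect at most $0+2N-N=N=n+2$. This is the same rank-counting argument as in the proof of Theorem~\ref{R32}. That proof never used the special form of $\Theta_n$: it only used that $V_2$ has $N$ rows. Hence it transfers verbatim to $\Phi_n$.

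\emph{Step 2: projection to two coordinates.} Apply Theorem~\ref{R42} to $\cl M'=\ker A_{\mathcal A_n}^{\Phi_n}\subset H^2(\mathbb D,\mathbb C^{N})$, with $d\le n+2$, $m=N$ and $i=2$. This gives defect at most $(n+2)+(n+2)-2=2n+2$ for $P_2(\cl M')=\ker\widetilde{T_n}$. Transporting back through $U^*$ then yields the claim for $\ker\widetilde A_n^{\cl M}$.

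\emph{Main obstacle.} The delicate point is the bookkeeping in Step~2. Theorem~\ref{R42} adds $m-i=n$, which comes from the dimension of $\cl V\subseteq\{0\}^2\times\mathbb C^{n}$, and it adds $\dim P_2(\cl D)\le n+2$. One must check that the defect space $P_2(\cl D)+\operatorname{span}\{P_2S^*f_j\}$ is in fact a valid defect space in the sense of the definition. Concretely, replacing it by its projection onto the orthogonal complement of $\ker\widetilde{T_n}$ should not enlarge its dimension, and closedness of $\ker\widetilde{T_n}$ must hold; the latter is automatic because it is a kernel. I would also verify that $S^*$ commutes with $P_2$ and that the lifted elements $f_j$ can be chosen inside $\ker A_{\mathcal A_n}^{\Phi_n}$, as in the proof of Theorem~\ref{R42}. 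One could hope to sharpen the bound by exploiting that the last $n$ coordinates are constants (so $S^*$ kills them), but I would not pursue that, since $2n+2$ is what is claimed.
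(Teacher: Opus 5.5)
Your proposal is correct and follows essentially the paper's route: transfer by $U$, Theorem~\ref{R51}, the Toeplitz-kernel realization of Lemma~\ref{R31}, then Theorem~\ref{R42}. The only difference is bookkeeping: the paper uses $P_2\circ P_N=P_2$ and applies Theorem~\ref{R42} once to $\ker T_{\mathfrak G_n}$ with $m=2N$, $i=2$, $d=0$, whereas you apply it twice (first $i=N$, giving defect at most $N$, then $m=N$, $i=2$), and both give $2N-2=2n+2$.
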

\begin{proof}
Consider the unitary map $U:\cl M\longrightarrow \cl K_\theta(\mathbb D,\mathbb C^2)$ given by $$\displaystyle{U(\phi f_1+\psi f_2)
=
\begin{pmatrix}
f_1\\
f_2
\end{pmatrix}}.$$
For $1\leq j\leq n$, define $p_j:=Uu_j$, and $q_j:=Uv_j$.
Since $U$ is unitary, rank-one operators are preserved under unitary
conjugation. More precisely,
$$
U(u_j\otimes v_j)U^*
=
(Uu_j)\otimes(Uv_j)
=
p_j\otimes q_j.
$$
Therefore
$$
\begin{aligned}
U\widetilde A_nU^*
=
U\left(A_g^\cl M-\sum_{j=1}^n u_j\otimes v_j\right)U^*
=
A_G^\theta-\sum_{j=1}^n p_j\otimes q_j.
\end{aligned}
$$
Define $\displaystyle{\widetilde{T_n}:=A_G^\theta-\sum_{j=1}^n p_j\otimes q_j}$. Thus $U\widetilde A_nU^*=\widetilde{T_n}$. Since $U$ is unitary, it follows immediately that $U(\ker\widetilde A^{\cl M}_n)=\ker \widetilde{T_n}$.

Now we apply Theorem \ref{R51}. Then,
$$
\ker \widetilde{T_n}
=
P_2\left(\ker A_{\mathcal A_n}^{\Phi_n}\right),
$$
where $P_2$ denotes the projection onto the first two coordinates. Put $N=n+2$, then the associated Toeplitz-kernel realization is given by the
$2N\times2N$ matrix-valued symbol
$$
\mathfrak G_n
=
\begin{pmatrix}
\Phi_n^*&0\\
\mathcal A_n&\Phi_n
\end{pmatrix}.
$$
By the Toeplitz-kernel realization, we have $\ker A_{\mathcal A_n}^{\Phi_n}= P_N(\ker T_{\mathfrak G_n}$), and consequently
$$
\ker \widetilde{T_n}=P_2(P_N(\ker T_{\mathfrak G_n}))=P_2(\ker T_{\mathfrak G_n}).
$$
Further, since
$\ker T_{\mathfrak G_n}$ is a Toeplitz kernel, it is nearly $S^*$-invariant with defect zero. Applying Theorem \ref{R42} to $\ker T_{\mathfrak G_n}$, with $m=2N, i=2, \text{ and }d=0$, we obtain that $P_2(\ker T_{\mathfrak G_n})$ is nearly $S^*$-invariant with defect at most $2N-2$. Therefore, 
$$
\operatorname{defect}(\ker \widetilde{T_n})
=
\operatorname{defect}\bigl(P_2(\ker T_{\mathfrak G_n})\bigr)\le 2N-2=2n+2.
$$
Finally, since $U\bigl(\ker\widetilde A_n^{\mathcal M}\bigr)
=\ker\widetilde T_n$, it follows that $U\bigl(\ker\widetilde A_n^{\mathcal M}\bigr)$ is nearly $S^*$-invariant with defect at most $2n+2$. Therefore, by using the transported backward shift, $\ker\widetilde A_n^{\mathcal M}$ is nearly
$S_{\mathcal M}^*$-invariant with defect at most $2n+2$.
This completes the proof.

\end{proof}

The preceding theorem gives a general defect bound for the kernel of a finite rank perturbation on a multiband space. To make the result more
concrete, we record the cases $n=0$, $n=1$, and $n=2$ separately.

\subsubsection{The case $n=0$:}

 For $n=0$, Theorem \ref{R52} gives $$\operatorname{defect}(\ker\widetilde A^{\cl M}_0)=\operatorname{defect}(\ker A_g^\cl M)\leq2.$$ It shows that $\ker A_g^\cl M$ is nearly $S^*$-invariant with defect at most $2$. This recovers Theorem \ref{D2}.

\subsubsection{The case $n=1$}

For $n=1$, Theorem \ref{R52} gives 
\begin{align*}
   \operatorname{defect}(\ker\widetilde A^{\cl M}_1)=\operatorname{defect}
\left(
\ker(A_g^\cl M-u\otimes v)
\right)
\leq2(1)+2=4. 
\end{align*}
In this case,
\[
\widetilde{T_1}=A_G^{\Phi_{1}}-p\otimes q, \text{ where } \Phi_1=\operatorname{diag}(\theta,\theta,z)
\]
and
\[
\mathcal A_1=
\begin{pmatrix}
g_{11}&g_{12}&-p_1\\
g_{21}&g_{22}&-p_2\\
q^*_1&q^*_2&-1
\end{pmatrix}.
\]
The associated matrix $\mathfrak G_1$ for Toeplitz-kernel realization is of order $6\times6$.

\subsubsection{The case $n=2$:}
For $n=2$, Theorem \ref{R52} gives
\[
\operatorname{defect}(\ker\widetilde A^{\cl M}_2)=\operatorname{defect}
\left(
\ker
(A_g^\cl M-u_1\otimes v_1-u_2\otimes v_2)
\right)
\leq2(2)+2=6.
\]
In this case 
\begin{align*}
\widetilde{T_2}=A_G^{\Phi_{2}}-p_1\otimes q_1 - p_2 \otimes q_2   \text{ where } \Phi_2
=
\operatorname{diag}(\theta,\theta,z,z)
\end{align*}
and 
\[
\mathcal A_2=
\begin{pmatrix}
g_{11}&g_{12}&-p_{11}&-p_{21}\\
g_{21}&g_{22}&-p_{12}&-p_{22}\\
 q^*_{11}&q^*_{12}&-1&0\\
 q^*_{21}&q^*_{22}&0&-1
\end{pmatrix}.
\]

The associated matrix $\mathfrak G_2$ for Toeplitz-kernel realization is of order $8\times8$.

\begin{remark}
 In Theorem \ref{R52}, we have established the following chain of representations:

 The multiband space $\cl M=\phi \cl K_\theta\oplus\psi \cl K_\theta$ is unitarily equivalent to $\cl K_\theta(\mathbb D,\mathbb C^2)$. For the perturbation of rank at most $n$, $$\displaystyle{
A_G^\theta-\sum_{j=1}^np_j\otimes q_j},$$
we introduce $n$ auxiliary coordinates and obtain $\cl K_{\Phi_n}
=
\cl K_\theta(\mathbb D,\mathbb C^2)\oplus\mathbb C^n$, where $\Phi_n
=
\operatorname{diag}
(\theta,\theta,z,\ldots,z)$. 
The corresponding Toeplitz-kernel realization takes place in $H^2(\mathbb D,\mathbb C^{2n+4})$, and projection onto the first two coordinates gives
\[
\ker
\left(
A_G^\theta-\sum_{j=1}^np_j\otimes q_j
\right).
\]
The coordinate-projection argument consequently yields
\[
\operatorname{defect}
\left(
\ker
\left(
A_G^\theta-\sum_{j=1}^np_j\otimes q_j
\right)
\right)
\leq2n+2.
\]

Equivalently, on the original multiband space $\cl M$, we get
\[
\operatorname{defect}
\left(
\ker
\left(
A_g^\cl M-\sum_{j=1}^nu_j\otimes v_j
\right)
\right)
\leq2n+2.
\]
The following table summarizes the relationship between the rank of the perturbation, the orders of the associated matrix-valued symbols $\mathcal A_n$ and $\mathfrak G_n$, and the resulting bound on the defect dimension:
\[
\begin{array}{c|c|c|c}
\text{rank of perturbation}
&
\text{order of }\mathcal A_n
&
\text{Order of } \mathfrak G_n
&
\text{defect bound}
\\
\hline
0&2\times 2&4\times4&2\\
1&3\times3&6\times6&4\\
2&4\times4&8\times8&6\\
3&5\times5&10\times10&8\\
\vdots&\vdots&\vdots&\vdots\\
n&(n+2)\times (n+2)&2(n+2)\times2(n+2)&2n+2.
\end{array}
\]
\end{remark}

\vspace{8mm}

\subsection*{Acknowledgements}

Both authors are supported by the Core Research Grant (CRG), File No.: CRG/2023/004826, by the Science and Engineering Research Board(SERB), Department of Science \& Technology (DST), and the FIST Grant No.: SR/FST/MS-II/2024/183 (C) by the Department of Science \& Technology (DST), Government of India.

\subsection*{Declaration on the use of OpenAI}
Authors acknowledge the use of OpenAI's ChatGPT for language improvement and grammatical refinement of the manuscript. The tool was used solely to enhance the clarity, readability, and presentation of the text. The authors take full responsibility for all mathematical content, results, proofs, and arguments presented in the manuscript.

\subsection*{Statements and Declarations} 
\noindent\mbox{}\par\vspace{0.2cm}

\noindent {\bf Competing Interests}  The authors have no competing interests to declare that are relevant to the content of this article.  

\vspace{.2 cm}

\noindent {\bf Data availability statements} Data sharing is not applicable to this article as no new data were created or analyzed in this study.

\bibliography{refs.bib}

\end{document}